\newcommand{\cmark}{\ding{51}}%
\newcommand{\xmark}{\ding{55}}%
\theoremstyle{plain}
\newtheorem{thm}{Theorem}
\newtheorem{lem}[thm]{Lemma}
\newtheorem{prop}[thm]{Proposition}
\newtheorem{alg}{Algorithm}
\newtheorem{rmk}[thm]{Remark}
\theoremstyle{definition}
\newtheorem{defn}{Definition}
\newtheorem{exmp}{Example}
\newcommand\xleftrightarrow[2][]{%
  \ext@arrow 9999{\longleftrightarrowfill@}{#1}{#2}}
\newcommand\longleftrightarrowfill@{%
  \arrowfill@\leftarrow\relbar\rightarrow}
\newcolumntype{P}[1]{>{\centering\arraybackslash}p{#1}}
\newcolumntype{M}[1]{>{\centering\arraybackslash}m{#1}}
	\let\oldnl\nl
	\newcommand{\nonl}{\renewcommand{\nl}{\let\nl\oldnl}}
\def\bfp{\mathbf{p}}
\def\OOO{\mathcal{O}}
\title{Reverse engineering of CAD models\\ via clustering and approximate implicitization}
\author{Andrea Raffo, Oliver J.D. Barrowclough and Georg Muntingh}
\begin{document}
\maketitle
In applications like computer aided design, geometric models are often represented numerically as polynomial splines or NURBS, even when they originate from primitive geometry. For purposes such as redesign and isogeometric analysis, it is of interest to extract information about the underlying geometry through reverse engineering. In this work we develop a novel method to determine these primitive shapes by combining clustering analysis with approximate implicitization. The proposed method is automatic and can recover algebraic hypersurfaces of any degree in any dimension. In exact arithmetic, the algorithm returns exact results. All the required parameters, such as the implicit degree of the patches and the number of clusters of the model, are inferred using numerical approaches in order to obtain an algorithm that requires as little manual input as possible. The effectiveness, efficiency  and robustness of the method are shown both in a theoretical analysis and in numerical examples implemented in Python.

\section{Introduction}
Reverse engineering, also known as back engineering, denotes a family of techniques moving from the physical instantiation of a model to its abstraction, by extracting knowledge that can be used for the reconstruction or the enhancement of the model itself \cite{ChikofskyCross1990}. Such methods are widely used in applications to obtain CAD models, for instance from point clouds acquired by scanning an existing physical model. Among the various reasons behind this interest are the speed-up of manufacturing and analysis processes, together with the description of parts no longer manufactured or for which only real-scale prototypes are available. In mechanical part design, high accuracy models of manufactured parts are needed to model parts of larger objects that should be assembled together precisely. Industrial design and jewelry reproduction often use reverse engineered CAD models, which are usually easier to obtain than by directly designing complex free-form shapes with CAD systems. Medicine applies reverse engineering directly to the human body, such as in the creation of bone pieces for orthopedic surgery and prosthetic parts. Finally, reverse engineering is often exploited in animation to create animated sequences of pre-existing models.

In the field of isogeometric analysis (IGA), volumetric (trivariate) CAD models based on B-splines are used both for design and analysis. One major step in supporting IGA in industry is to provide backwards compatibility with today's boundary represented (B-rep) CAD models. Conversion from bivariate surface models to trivariate volume models is a challenging, and as yet unsolved problem. Methods for constructing such models include block structuring \cite{Skytt2016} and volumetric trimming \cite{Dokken2019}, or a hybrid of the two. In all cases, information about the underlying surfaces in the model is of key importance. This problem was the original motivation behind the work in this paper. However, the method can also be utilized for many of the applications outlined above in the description of reverse engineering. In particular, redesign of models is an interesting application. As CAD models are typically represented as discrete patches, modifying them individually becomes a cumbersome job when the number of patches is large. Our method clusters all patches belonging to the same primitives together. This could be a key tool in better supporting parametric design in CAD systems that are based on direct modelling.

The first step of a reverse engineering process is digitalization, i.e., the acquisition of 2-D or 3-D data point clouds. Once the acquisition is completed, model features are identified from the point clouds using segmentation techniques. Edge-based approaches to segmentation aim to find boundaries in the point data representing edges between surfaces. Faced-based segmentation proceeds in the opposite order, trying to partition a given point cloud according to the underlying primitive shapes \cite{VaradyMartinCox1997}. We refer the reader to \cite{NiuMartinSabinLangbeinBucklow2015, NiuMartinLangbeinSabin2015, CAGD4, CAGD2, CAGD3, CAGD1} for further strategies of feature detection in CA(G)D. Finally, surface modelling techniques are applied to represent points in each of the detected regions. Some of the most used representations are point clouds, meshes (e.g. polygons or triangles), boundary representations (e.g. NURBS or B-spline patches), constructive solid geometry (CSG) models, spatial partitioning models, and feature-based and constraint-based models \cite{Fudos2006}. An example of relevant problem related to the just-obtained representations is part-in-whole retrieval (PWR), where a part is given as a query model and all models containing such a query part are retrieved (see, for example, \cite{Xiao2011} and \cite{Muraleedharan2019}).

In this paper the geometry is reconstructed using implicit algebraic surfaces. According to a survey reported in \cite{Rossignac1987}, ``99 percent of mechanical parts can be modelled exactly if one combines natural quadrics with the possibility of representing fillets and blends.'' Hence algebraic surfaces of low degree naturally provide global representations and are thus well suited to our application of detecting which patches belong to the same underlying geometry. The formal problem statement that we solve is as follows: given a set of non-overlapping rational parametrized patches in $\mathbb{R}^n$, partition the patches into subsets corresponding to the underlying primitive shape they originate from.

The main contributions of this paper are:
\begin{itemize}
    \item The introduction of a novel algorithm to group patches of a given CAD model with respect to the underlying primitive geometry.  
    \item A theoretical study of the stability and the computational complexity of the method.
    \item The validation of the method on both synthetic and real data.
\end{itemize}

The paper is organized as follows. In Section \ref{Background} we introduce the main tools of our method, presenting basic definitions and existing results that will be used later on. In Section \ref{TheAlgorithm}, the core of our paper, we present the building blocks and mathematical results forming the foundation of the detection method. In Section \ref{SketchOfTheAlgorithm} we formalize these results into an explicit algorithm. In Section \ref{Theoreticalanalysis} we present a theoretical analysis of the robustness of the method. In Section~\ref{Examples} we present several examples of the application of our method, based on an implementation of the algorithm in Python that is available online \cite{CodiceMldtt}. Section~\ref{sec:Conclusion} concludes the main part of the paper, discussing encountered challenges and directions for future research. Finally, in Appendix \ref{Appendix} we report some of the proofs regarding the mathematical theory behind the algorithm.

\section{Background}
\label{Background}
\subsection{Clustering methods}
Clustering is a well-established unsupervised statistical learning technique, gathering a group of objects into a certain number of classes (or clusters) based on a flexible and non-parametric approach (see \cite{Hastie2001} and its references). The grouping is performed so as to ensure that objects in the same class are more similar to each other than to elements of other classes. The problem can be traced back to ancient Greece and has extensively been studied over the centuries for its various applications in medicine, natural sciences, psychology, economics and other fields. As a consequence, the literature on the topic is vast and heterogeneous with the yearly \textit{Classification Literature Automated Search Service} listing dozens of books and hundreds of papers published on the topic.

Mathematically, clustering refers to a family of methods aiming to partition a set $X=\{\mathbf{x}_1,\dots,\mathbf{x}_N\}$ such that all elements of a cluster share the same or closely related properties. In the context of statistics, any coordinate of a (data) point $\mathbf{x}_i=(x_{i1},\dots,x_{in})^T\in\mathbb{R}^n$ is the realization of a \textit{feature} or \textit{attribute}.

The grouping is performed by defining a notion of dissimilarity between elements of $X$.
\begin{defn}[Dissimilarity]
\label{defn:Dissimilarity}
A \textit{dissimilarity} on a set $X$  is a function $d:X\times X\rightarrow \mathbb{R}$ such that for all $\mathbf{x},\mathbf{y}\in X$:
\begin{enumerate}
\item $d(\mathbf{x},\mathbf{y})=d(\mathbf{y},\mathbf{x})$ \hfill(symmetry);
\item $d(\mathbf{x},\mathbf{y})\ge 0$ \hfill(positive definiteness);
\item $d(\mathbf{x},\mathbf{y})=0$ iff $\mathbf{x}=\mathbf{y}$ \hfill(identity of indiscernibles).
\end{enumerate}
\end{defn}
The concept of dissimilarity is more general than the notion of distance, where in addition to the three properties in Definition \ref{defn:Dissimilarity} we take in account also the triangular inequality:
\begin{defn}[Distance]
\label{defn:Distance}
A \textit{distance} on a set $X$  is a dissimilarity $d:X\times X\rightarrow \mathbb{R}$ such that:
\begin{enumerate}
\setcounter{enumi}{3}
\item $d(\mathbf{x},\mathbf{y})\le d(\mathbf{x},\mathbf{z})+d(\mathbf{z},\mathbf{y})$\qquad $\forall\, \mathbf{x},\mathbf{y},\mathbf{z}\in X$ \hfill(triangular inequality).
\end{enumerate}
\end{defn}
The choice of the dissimilarity (or distance) strongly depends on the problem of interest, since it allows one to determine which elements are closer to each other by giving a greater importance to certain properties compared to other unfavourable ones. From a geometrical viewpoint, the use of a dissimilarity rather than a distance leads to a generalized metric space \cite{Khamsi2015}.

\begin{exmp}
A traditional way to measure distances in a Euclidean space $\mathbb{R}^n$ is a Minkowski distance, i.e., a member of the family of metrics:
\[d_P(\mathbf{x},\mathbf{y}):=\left(\sum_{i=1}^n|x_i-y_i|^p \right)^{1/p}, \quad p\ge 1.\] 
Common choices in this family are the Manhattan, Euclidean and Chebyshev distances (respectively: $p=1,2$ and $\infty$).
\end{exmp}

Once the dissimilarity $d$ between elements has been chosen, it is necessary to define a generalization of $d$ to compare subsets of $X$. In this context generalization means that if two clusters are singletons, then their dissimilarity corresponds to the original dissimilarity between the respective elements. Clearly the generalization of $d$ is not uniquely determined and must be chosen on a case-by-case basis. Finally, an algorithm for grouping the data is designed via the defined $d$ so as to reach a faithful clustering after a finite number of steps.

Conventional clustering algorithms can be classified into two categories: hierarchical and partitional algorithms. There are two families of hierarchical algorithms:
\begin{itemize}
\item Agglomerative hierarchical algorithms start with singletons as clusters and proceed by merging the clusters that are closest step by step.
\item Divisive hierarchical algorithms work the other way around, starting with a single cluster containing all the elements and proceeding with a sequence of partitioning steps.
\end{itemize}

In this work we follow the agglomerative hierarchical approach in order to minimize the a priori knowledge that is required, consisting in general of only the number of clusters. Partitional algorithms require further information. In our method we propose an approach to infer this parameter, in order to keep the algorithm automatic. We emphasize that the result of different clustering methods highly depends on the considered dissimilarity, which is both an advantage and a disadvantage since it requires an additional choice to be made. For further details we refer the reader to \cite{GanMaWu} and references therein.

\subsection{Approximate implicitization}
\label{ApproximateImplicitization}
Implicitization is the process of computing an implicit representation of a parametric hypersurface. In algebraic geometry, traditional approaches to implicitization are based on Gr\"{o}bner bases, resultants and moving curves and surfaces \cite{Kotsireas2004}. These methods present different computational challenges, such as the presence of additional solutions and a low numerical stability. Over the last decades several alternatives have been introduced in CAGD to reach an acceptable trade-off between accuracy of representation and numerical stability.

Approximate implicitization (see \cite{BarrowcloughDokken2012, Dokken1997}) defines a family of algorithms for ``accurate'' single polynomial approximations. Approximate implicitization can be performed piecewise by dividing the model into smooth components. This approach is of interest in applications from computer graphics, where the models are rarely described by a single polynomial. To simplify our presentation, we will restrict our attention to hypersurfaces in $\mathbb{R}^n$ (varieties of codimension $1$), even though the classical theory can be applied to varieties of any codimension. In order to avoid degenerate parametric hypersurfaces, we assume their domains $\Omega\subset\mathbb{R}^{n-1}$ to be (Cartesian products of) closed intervals.

\begin{defn}[Exact implicitization of a parametric hypersurface]
Let $\mathbf{p}:\Omega\subset\mathbb{R}^{n-1}\to\mathbb{R}^n$ be a hypersurface in $\mathbb{R}^n$. An \emph{exact implicitization} of $\mathbf{p}$ is any nonzero $n$-variate polynomial $q$ such that:
\begin{equation*}
    q(\mathbf{p}(\mathbf{s}))=0, \quad \mathbf{s}\in\Omega
\end{equation*}
\end{defn}

\begin{defn}[Approximate implicitization of a parametric hypersurface]
Let $\mathbf{p}:\Omega\subset\mathbb{R}^{n-1}\to\mathbb{R}^n$ be a parametric hypersurface. An approximate implicitization of $\mathbf{p}$ within the tolerance $\varepsilon\ge 0$ is any nonzero $n$-variate polynomial $q$, for which there exists a continuous direction function $\mathbf{g}:\Omega\to\mathbb{S}^n$, with $\mathbb{S}^n\subset\mathbb{R}^n$ the unit sphere, and a continuous error function $\eta:\Omega\to(-\varepsilon,\varepsilon)$, such that:
\begin{equation*}
    q(\mathbf{p}(\mathbf{s})+\eta(\mathbf{s})g(\mathbf{s}))=0, \quad \mathbf{s}\in\Omega
\end{equation*}
\end{defn}

\begin{lem}[Dokken \cite{Dokken1997}]
\label{lem:MatricialExpression}
Let $q(\mathbf{x})=0$ define an algebraic hypersurface of degree $m$ and $\mathbf{p}=\mathbf{p}(\mathbf{s})$ be a (polynomial or) rational parametrization of (multi)degree $\mathbf{n}$ expressed in a given basis. Then it follows that the composition $q\big(\mathbf{p}(\mathbf{s})\big)$ can be expressed in a basis $\boldsymbol{\alpha}(\mathbf{s})$ for the polynomials of (multi)degree at most $m\mathbf{n}$. Explicitly,
\[ q\big(\mathbf{p}(\mathbf{s})\big)=(\mathbf{Db})^T\boldsymbol{\alpha}(\mathbf{s}),\]
where
\begin{itemize}
\item  $\mathbf{D}$ is a matrix built from products of the coordinate functions of $\mathbf{p}(\mathbf{s})$;
\item $\mathbf{b}$ is a column vector containing the unknown coefficients of $q$ with respect to a chosen basis for the polynomials of total degree at most $m$.
\end{itemize}
\end{lem}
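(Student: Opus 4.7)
The plan is to reduce the statement to a linearity-and-basis-change argument, once the correct degree bound on the composition is established. First I would fix a basis $\{\phi_1,\dots,\phi_L\}$ for the vector space of $n$-variate polynomials of total degree at most $m$ (for example, the monomial basis, so $L=\binom{m+n}{n}$). Writing $q(\mathbf{x})=\sum_{j=1}^L b_j\,\phi_j(\mathbf{x})$ identifies the unknown coefficient vector $\mathbf{b}=(b_1,\dots,b_L)^T$ that appears in the statement.

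Second, I would verify that each composition $\phi_j(\mathbf{p}(\mathbf{s}))$ belongs to the space of $(n-1)$-variate polynomials of multidegree at most $m\mathbf{n}$. In the polynomial case this is immediate: if $\phi_j(\mathbf{x})=\prod_i x_i^{a_i^{(j)}}$ with $\sum_i a_i^{(j)}\le m$, then $\phi_j(\mathbf{p}(\mathbf{s}))=\prod_i p_i(\mathbf{s})^{a_i^{(j)}}$, and since each $p_i$ has multidegree at most $\mathbf{n}$, the product has multidegree bounded by $(\sum_i a_i^{(j)})\mathbf{n}\le m\mathbf{n}$ componentwise. For a rational parametrization with common denominator $w(\mathbf{s})$ of multidegree $\mathbf{n}$, the same argument applied to $w(\mathbf{s})^m\,q(\mathbf{p}(\mathbf{s}))$ yields a polynomial in $\mathbf{s}$ of multidegree at most $m\mathbf{n}$; the scalar factor $w(\mathbf{s})^m$ does not affect the vanishing set and can be absorbed into the identification of $\boldsymbol{\alpha}(\mathbf{s})$.

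Third, since $\boldsymbol{\alpha}(\mathbf{s})=(\alpha_1(\mathbf{s}),\dots,\alpha_K(\mathbf{s}))^T$ is a basis for the polynomial space of multidegree at most $m\mathbf{n}$, each $\phi_j(\mathbf{p}(\mathbf{s}))$ has a unique expansion $\phi_j(\mathbf{p}(\mathbf{s}))=\sum_{k=1}^K D_{kj}\,\alpha_k(\mathbf{s})$, and by construction the entries $D_{kj}$ are built from products of the coordinate functions of $\mathbf{p}(\mathbf{s})$ (they are the coefficients of such products in the chosen basis). Collecting these scalars into the matrix $\mathbf{D}=(D_{kj})$ and exchanging the order of summation gives
\[
q(\mathbf{p}(\mathbf{s}))=\sum_{j=1}^L b_j\sum_{k=1}^K D_{kj}\,\alpha_k(\mathbf{s})=\sum_{k=1}^K\Bigl(\sum_{j=1}^L D_{kj} b_j\Bigr)\alpha_k(\mathbf{s})=(\mathbf{Db})^T\boldsymbol{\alpha}(\mathbf{s}),
\]
which is exactly the desired identity.

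Conceptually the proof is a pure change-of-basis argument, so the main obstacle is essentially bookkeeping: one must carefully track how the multidegree bound $m\mathbf{n}$ arises termwise, and handle the rational case either by clearing denominators uniformly or by enlarging $\boldsymbol{\alpha}(\mathbf{s})$ to accommodate the factor $w(\mathbf{s})^m$. Once the degree count is in place, the matrix $\mathbf{D}$ is forced on us by the uniqueness of basis expansion, and no further computation is required.
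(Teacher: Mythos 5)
Your proposal is correct, and it is essentially the standard construction behind this lemma: the paper itself states the result without proof, citing Dokken, and your argument (expand $q$ in a basis $\{\phi_j\}$ of the degree-$m$ polynomials, observe each $\phi_j\circ\mathbf{p}$ lies in the span of $\boldsymbol{\alpha}(\mathbf{s})$, collect the expansion coefficients into $\mathbf{D}$, and use linearity) is exactly that construction. The only point to state a bit more carefully is the rational case: with a common denominator $w(\mathbf{s})$ of multidegree $\mathbf{n}$, the identity as written holds for $w(\mathbf{s})^m\,q(\mathbf{p}(\mathbf{s}))$ expressed in a polynomial basis of multidegree $m\mathbf{n}$, or equivalently for $q(\mathbf{p}(\mathbf{s}))$ itself if $\boldsymbol{\alpha}(\mathbf{s})$ is taken to be the corresponding rational (e.g.\ NURBS) basis $\alpha_k=\beta_k/w^m$; saying the factor ``can be absorbed into $\boldsymbol{\alpha}$'' is the right idea, but it is worth making explicit since the nonnegative partition-of-unity property of that basis is what is used later for the error bound.
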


Note that, if $\mathbf{b}\ne 0$ is in the nullspace of $\mathbf{D}$, then $q\big(\mathbf{p}(\mathbf{s})\big)=0$ and $\mathbf{b}$ contains the coefficients of an exact implicitization of $\mathbf{p}(\mathbf{s})$. If the kernel of $\mathbf{D}$ is trivial we look for an approximate implicit representation of $\mathbf{p}$ by minimizing the \emph{algebraic error} $||q\circ\mathbf{p}||_{\infty}$.

\begin{prop}[Dokken \cite{Dokken1997}]
\label{prop:UpperBoundAE}
Let $q(\mathbf{x})=0$ be an algebraic hypersurface of degree $m$ and $\mathbf{p}=\mathbf{p}(\mathbf{s})$ be a polynomial or rational parametrization of (multi)degree $\mathbf{n}$ expressed in a given basis. Then
\[\min_{\lVert \boldsymbol{b}\rVert_2=1}\max_{\mathbf{s}\in\Omega}|q\big(\mathbf{p}(\mathbf{s})\big)|\le\max_{\mathbf{s}\in\Omega}||\boldsymbol{\alpha}(\mathbf{s})||_2\sigma_{\min},\]
where $\sigma_{\min}$ is the smallest singular value of the matrix $\mathbf{D}$ defined in Lemma \ref{lem:MatricialExpression}.
\end{prop}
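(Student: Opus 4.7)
The plan is to chain together three standard ingredients: the matrix identity of Lemma~\ref{lem:MatricialExpression}, the Cauchy--Schwarz inequality, and the variational characterization of the smallest singular value.

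First I would start from the identity
\[
q\bigl(\mathbf{p}(\mathbf{s})\bigr) = (\mathbf{Db})^{T}\boldsymbol{\alpha}(\mathbf{s})
\]
provided by Lemma~\ref{lem:MatricialExpression}, valid for every $\mathbf{s}\in\Omega$ and every coefficient vector $\mathbf{b}$. Applying Cauchy--Schwarz to the right-hand side gives the pointwise bound
\[
\bigl|q\bigl(\mathbf{p}(\mathbf{s})\bigr)\bigr| \le \lVert \mathbf{Db}\rVert_{2}\,\lVert \boldsymbol{\alpha}(\mathbf{s})\rVert_{2}.
\]
Taking the supremum over $\mathbf{s}\in\Omega$ on both sides (and noting that $\lVert \mathbf{Db}\rVert_{2}$ does not depend on $\mathbf{s}$) yields
\[
\max_{\mathbf{s}\in\Omega}\bigl|q\bigl(\mathbf{p}(\mathbf{s})\bigr)\bigr| \le \lVert \mathbf{Db}\rVert_{2}\,\max_{\mathbf{s}\in\Omega}\lVert \boldsymbol{\alpha}(\mathbf{s})\rVert_{2}.
\]

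Next I would minimize over unit-norm $\mathbf{b}$. Since the factor $\max_{\mathbf{s}}\lVert \boldsymbol{\alpha}(\mathbf{s})\rVert_{2}$ is independent of $\mathbf{b}$, it passes outside the minimization, leaving
\[
\min_{\lVert \mathbf{b}\rVert_{2}=1}\max_{\mathbf{s}\in\Omega}\bigl|q\bigl(\mathbf{p}(\mathbf{s})\bigr)\bigr| \le \max_{\mathbf{s}\in\Omega}\lVert \boldsymbol{\alpha}(\mathbf{s})\rVert_{2}\cdot\min_{\lVert \mathbf{b}\rVert_{2}=1}\lVert \mathbf{Db}\rVert_{2}.
\]
The standard variational identity $\min_{\lVert \mathbf{b}\rVert_{2}=1}\lVert \mathbf{Db}\rVert_{2}=\sigma_{\min}$ (read off from the singular value decomposition of $\mathbf{D}$, with the minimum attained at the right singular vector associated with $\sigma_{\min}$) then delivers precisely the claimed inequality.

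There is no real obstacle: the proof is essentially a sequence of standard inequalities built on top of Lemma~\ref{lem:MatricialExpression}. The only point that deserves a brief justification is the swap between taking the maximum in $\mathbf{s}$ before minimizing in $\mathbf{b}$, which is legitimate here only because of the separation of variables provided by Cauchy--Schwarz; without this separation one would be left with a genuine minimax problem rather than the clean product bound above.
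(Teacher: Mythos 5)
Your argument is correct and is essentially the paper's own proof: both start from the identity $q\bigl(\mathbf{p}(\mathbf{s})\bigr)=(\mathbf{Db})^{T}\boldsymbol{\alpha}(\mathbf{s})$ of Lemma~\ref{lem:MatricialExpression}, apply Cauchy--Schwarz to separate the $\mathbf{b}$- and $\mathbf{s}$-dependence, and then invoke $\min_{\lVert\mathbf{b}\rVert_2=1}\lVert\mathbf{Db}\rVert_2=\sigma_{\min}$. Your extra remark about why the max and min may be decoupled is a fine clarification but does not change the route.
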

\begin{proof}
Applying the Cauchy-Schwarz inequality,
\[\begin{split}
\min_{\lVert \boldsymbol{b}\rVert_2=1}\max_{\mathbf{s}\in\Omega}|q\big(\mathbf{p}(\mathbf{s})\big)|&=\min_{\lVert \boldsymbol{b}\rVert_2=1}\max_{\mathbf{s}\in\Omega}|(\mathbf{Db})^T\boldsymbol{\alpha}(\mathbf{s})|\\
&\le\max_{\mathbf{s}\in\Omega}||\boldsymbol{\alpha}(\mathbf{s})||_2\min_{\lVert \boldsymbol{b}\rVert_2=1}||\mathbf{D}\mathbf{b}||_2\\
&=\max_{\mathbf{s}\in\Omega}||\boldsymbol{\alpha}(\mathbf{s})||_2\sigma_{\min},
\end{split}\]
where we used that the smallest singular value $\sigma_{\min}$ of $\mathbf{D}$ takes the form
\[ \sigma_{\min} = \min_{\lVert \boldsymbol{b}\rVert_2=1}||\mathbf{D}\mathbf{b}||_2. \qedhere\]
\end{proof}

Notice that the upper bound on the maximal algebraic error depends on the choice of the basis $\boldsymbol{\alpha}$. If the basis forms a non-negative partition of unity, then
\[\min_{\lVert \boldsymbol{b}\rVert_2=1}\max_{\mathbf{s}\in\Omega}|q\big(\mathbf{p}(\mathbf{s})\big)|\le\sigma_{\min}.\]
Denoting by $\mathbf{b}_i$ the singular vector corresponding to the singular value $\sigma_i$ of $\mathbf{D}$ and with $q_i$ the algebraic hypersurface identified by $\mathbf{b}_i$, we have
\begin{equation} \label{eqn:GoodnessApproxImplicit} |q_i\big(\mathbf{p}(\mathbf{s})\big)|\le\max_{\mathbf{s}\in\Omega}||\boldsymbol{\alpha}(\mathbf{s})||_2\sigma_i.\end{equation}
Thus the singular value $\sigma_i$ is a measure of how accurately $q_i(\mathbf{x})=0$ approximates $\mathbf{p}(\mathbf{s})$.

\subsubsection{Discrete approximate implicitization}
One of the fastest and simplest numerical implementations of approximate implicitization is based on a discrete least squares approximation of a point cloud $\mathscr{P}=\{\mathbf{p}(\mathbf{s}_i)\}_{i=1}^N$ sampled from the parametric manifold. The choice of using a point cloud sampled in parameter space is equivalent to choosing $\boldsymbol{\alpha}$ to be a Lagrange basis. This approach considers a \emph{collocation matrix} $\mathbf{D}$, expressed elementwise in terms of a basis $\{\pi_j\}_{j=1}^M$ of the set of $n$-variate polynomials of total degree at most $m$ as
\begin{equation}\label{eqn:CollocationMatrix}
D_{i,j}=\pi_j\big(\mathbf{p}(\mathbf{s}_i)\big), \qquad i=1,\dots,N,\qquad j=1,\dots,M.
\end{equation}
Analogously to Proposition \ref{prop:UpperBoundAE}, we can bound the algebraic error as
\[
\min_{\lVert \boldsymbol{b}\rVert_2=1}\max_{\mathbf{s}\in\Omega}|q(\mathbf{p}(\mathbf{s}))|\le\max_{\mathbf{s}\in\Omega}\lVert\boldsymbol{\alpha}(\mathbf{s})\rVert_2\sigma_{\min}\le\Lambda(\boldsymbol{\alpha})\sigma_{\min},
\]
where 
\begin{itemize}
\item $\Lambda(\boldsymbol{\alpha})$ is the Lebesgue constant from interpolation theory defined by $\Lambda(\boldsymbol{\alpha}):=\max_{\mathbf{s}\in\Omega}\lVert\boldsymbol{\alpha}(\mathbf{s})\rVert_1.$
\item $\sigma_{\min}$ is the smallest singular value of $\mathbf{D}$. This singular value depends on the point cloud $\mathscr{P}$, the total degree $m$ and the basis $\{\pi_j\}_{j=1}^M$. Thus, a more correct notation is $\sigma_{\text{min}}^{(m)}(\mathscr{P},\{\pi_j\}_{j=1}^M)$. For the sake of simplicity, the dependence on the point cloud $\mathscr{P}$, the total degree $m$ and the basis $\{\pi_j\}_{j=1}^M$ are omitted when not risking a misunderstanding. 
\end{itemize}

\setcounter{thm}{0}
We summarize the procedure to compute a discrete approximate implicitization as follows:
\begin{alg}
\label{alg:DAI}
Given a point cloud $\mathscr{P}:=\left\{\mathbf{p}(\mathbf{s}_1),\dots,\mathbf{p}(\mathbf{s}_N)\right\}$ sampled from a parametric hypersurface $\mathbf{p}$ and a degree $m$ for the implicit polynomial.
\begin{enumerate}
\item Construct the collocation matrix $\mathbf{D}$ by evaluating the polynomial basis $\{\pi_j\}_j$ at each point of $\mathscr{P}$ as in \eqref{eqn:CollocationMatrix}.
\item Compute the singular value decomposition $\mathbf{D}=\mathbf{U}\boldsymbol{\Sigma}\mathbf{V}^T$.
\item Select $\mathbf{b}=\mathbf{v}_{\min}^T$ the right singular vector corresponding to the smallest singular value $\sigma_{\min}^{(m)}\left(\mathscr{P}, \{\pi_j\}_{j=1}^M\right)$. 
\end{enumerate}
\end{alg}

\setcounter{thm}{2}
\section{The algorithm}
\label{TheAlgorithm}
CAD models can nowadays include millions of patches and billions of control points, making it necessary to develop algorithms that can process a huge amount of data. The novel approach we present combines flexibility, a controlled computational complexity and a high robustness to work in floating-point arithmetic. In exact arithmetic, the algorithm returns exact results. Our method does not require knowledge of the degree of the patches or the number of primitive shapes from which a certain model originates. Our idea is to infer these parameters using numerical approaches, to keep the algorithm automatic when they are not a priori available.

In this section the individual parts of the proposed reverse engineering algorithm are described in detail. We start with a pre-processing step aimed at dividing the set of all components in subsets according to their degree. Next, we describe a strategy for grouping the patches according to the primitive shapes they originate from, followed by description of how the free parameters are set. We end the section with a sketch of the complete algorithm.

\subsection{Calibration and pre-processing}
\label{CalibrationPreProcessing}
Let $X$ be the set of all the patches composing the model in $\mathbb{R}^n$ (here: $n=2,3$). The pre-processing step consists of  the partitioning $X=\cup_iX_i$ of the patches according to the suspected degree of their implicit form. The following lemma shows that we can use approximate implicitization to achieve our purpose.

\begin{lem}
\label{rmk:Degreecons}
Let $\tau$ be a non-trivial polynomial or rational patch in $\mathbb{R}^n$. Then the implicit degree of $\tau$ can be written as
\begin{equation}\label{eqn:ImplicitDegree}m:=\min\left\{\bar{m}\in\mathbb{N}^\ast \text{ s.t. } \sigma_{\min}^{(\bar{m})}=0\right\},
\end{equation}
where $\sigma_{\min}^{(\bar{m})}$ is the shorthand notation for the smallest singular value of Algorithm \ref{alg:DAI}. Here, $\mathscr{P}$ is a point cloud sampled on $\tau$ and $\{\pi_j\}_{j=1}^M$ is any basis of total degree $\bar{m}$.
\end{lem}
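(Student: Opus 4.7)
The plan is to prove two complementary implications: first, that $\sigma_{\min}^{(\bar{m})}=0$ whenever $\bar{m}\ge m$, and second, that $\sigma_{\min}^{(\bar{m})}>0$ whenever $\bar{m}<m$. Together these identify $m$ as the minimum $\bar{m}$ for which the smallest singular value vanishes. The whole argument hinges on the observation that, by the discrete collocation setup of Algorithm \ref{alg:DAI}, for any candidate polynomial $q$ with coefficient vector $\mathbf{b}$ in the basis $\{\pi_j\}_{j=1}^M$, the $i$-th entry of $\mathbf{D}\mathbf{b}$ equals $q\bigl(\mathbf{p}(\mathbf{s}_i)\bigr)$.

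For the easy direction, suppose $\bar{m}\ge m$. By the definition of implicit degree, there exists a nonzero polynomial $q$ of total degree at most $m$ (hence at most $\bar{m}$) with $q\bigl(\mathbf{p}(\mathbf{s})\bigr)=0$ for all $\mathbf{s}\in\Omega$. In particular $q\bigl(\mathbf{p}(\mathbf{s}_i)\bigr)=0$ for every sample point, so the coefficient vector $\mathbf{b}$ of $q$ lies in $\ker\mathbf{D}$. Since $\mathbf{b}\ne 0$, the matrix $\mathbf{D}$ has a nontrivial kernel and therefore $\sigma_{\min}^{(\bar{m})}=0$. This handles $\bar{m}=m$ and confirms that the minimum in \eqref{eqn:ImplicitDegree} is no larger than $m$.

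For the converse, assume toward a contradiction that $\sigma_{\min}^{(\bar{m})}=0$ for some $\bar{m}<m$. Then a nonzero $\mathbf{b}\in\ker\mathbf{D}$ exists, corresponding to a nonzero polynomial $q$ of total degree at most $\bar{m}$ vanishing at every $\mathbf{p}(\mathbf{s}_i)$. The key step is to upgrade this finite set of vanishing conditions to the identity $q\circ\mathbf{p}\equiv 0$. By Lemma \ref{lem:MatricialExpression}, $q\bigl(\mathbf{p}(\mathbf{s})\bigr)$ (after clearing denominators, in the rational case) is a polynomial in $\mathbf{s}$ of multidegree at most $\bar{m}\mathbf{n}$. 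Provided the point cloud $\mathscr{P}$ is sampled richly enough (more than $\dim\text{span}\{\boldsymbol{\alpha}\}$ points in sufficiently general position, e.g.\ on a tensor-product grid of appropriate size), the only polynomial of that multidegree vanishing at all $\mathbf{s}_i$ is the zero polynomial, which forces $q\circ\mathbf{p}\equiv 0$ on $\Omega$. But then $q$ is an exact implicitization of $\tau$ of degree strictly less than $m$, contradicting the definition of the implicit degree.

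The substantive part of the argument is this last upgrade step, and it is here that one must be explicit about the sampling assumption: without enough, or without sufficiently generic, samples, $\ker\mathbf{D}$ can contain coefficient vectors of polynomials that vanish at the $\mathbf{s}_i$ merely by coincidence. I would frame this either as a standing hypothesis on $\mathscr{P}$ (enough samples in general position) or as a remark that any cloud drawn from a generic tensor-product grid satisfies the required unisolvence condition. The two directions above then combine to yield \eqref{eqn:ImplicitDegree}.
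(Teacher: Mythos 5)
Your proposal is correct and, at its core, runs along the same lines as the paper's argument: an exact implicitization of degree $m$ yields a nonzero vector in $\ker\mathbf{D}$, hence $\sigma_{\min}^{(\bar m)}=0$ for every $\bar m\ge m$, while minimality of the implicit degree rules out a vanishing smallest singular value for $\bar m<m$. The difference is one of completeness. The paper's own proof is a two-bullet appeal to Proposition \ref{prop:UpperBoundAE} and simply \emph{asserts} that for $\bar m<m$ the smallest singular value is strictly positive; you actually prove this, by noting that a nonzero kernel vector of the collocation matrix corresponds to a nonzero polynomial $q$ of degree at most $\bar m$ vanishing at the samples, and then upgrading these finitely many vanishing conditions to $q\circ\mathbf{p}\equiv 0$ via the degree bound of Lemma \ref{lem:MatricialExpression} together with a unisolvence assumption on the parameter samples, contradicting the definition of the implicit degree. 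The sampling hypothesis you flag is not a defect of your argument but a genuine requirement of the discrete statement: with too few (or degenerate) samples, $\ker\mathbf{D}$ can be nontrivial already for $\bar m<m$, and the minimum in \eqref{eqn:ImplicitDegree} would undershoot $m$. The paper addresses this only separately, in Remark \ref{rmk:DegreeEstimation}, through the requirement of at least $N_{\text{min}}$ samples (e.g.\ $N_{\text{min}}=m^2+1$ for non-degenerate rational planar curves, which is exactly the count your bound $\bar m\mathbf{n}$ gives for curves), so promoting it to a standing hypothesis, as you do, is the right way to make the lemma literally true as stated for Algorithm \ref{alg:DAI}.
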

\begin{proof}
From Proposition \ref{prop:UpperBoundAE} it follows that
\begin{itemize}
\item Approximate implicitization of degree $\bar{m}< m$ provides the coefficients of an approximate implicit representation, together with strictly positive smallest singular value that measures the accuracy of the approximation. 
\item Approximate implicitization of degree $\bar{m}=m$ provides the coefficients of the exact implicit representation, and the smallest singular value is zero. \qedhere
\end{itemize}
\end{proof}
\begin{rmk}
\label{rmk:DegreeEstimation}
Let $N_{\text{min}}$ be the minimum number of samples guaranteeing a unique exact implicitization (e.g. $N_{\text{min}}=m^2+1$ for a non-degenerate rational parametric planar curve of implicit degree $m$). Lemma \ref{rmk:Degreecons} can be extended to consider discrete approximate implicitization if at least $N_{\text{min}}$ unique samples are considered (see \cite{BarrowcloughDokken2012} for details). 
\end{rmk}

\begin{rmk}
\label{rmk:TrimmedSurfaces}
Trimmed surfaces introduce a few additional complexities, but can be dealt with in a similar way.
Given that trimming curves are often irregular, it is in general not possible to achieve a regular sampling that conforms to the boundaries of the model.
In general we may say that if the underlying surface belongs to a certain primitive, then so will any trimmed region of that surface.
Conversely, although it is possible that the trimmed region of a surface belongs to a certain primitive while the underlying surface does not, such cases are pathological.
We can thus deal with trimmed surfaces in two ways.
The first approach is to simply sample the underlying surface regularly, disregarding the trimming region. 
Another approach is to perform random oversampling of points within the trimmed region to ensure that enough samples are taken. 
The latter approach is also robust to pathological examples. 
\end{rmk}

Equation \eqref{eqn:ImplicitDegree} is useful to determine the implicit degree of a patch. This characterization for the implicit degree fails when we work in floating-point arithmetic, due to the approximations adopted in the computation. A possible modification is the relaxation of (\ref{eqn:ImplicitDegree}) by the weaker criterion
\begin{equation}\label{eqn:WeakerImplicitDegree}
m:=\min\left\{\bar{m}\in\mathbb{N}^\ast \text{ s.t. } \sigma_{\min}^{(\bar{m})}<\xi^{(\bar{m})}\right\},
\end{equation} 
where the threshold $\xi^{(\bar{m})}$ is introduced to take into account the rounding-off error of floating point arithmetic and other possible sources of uncertainty. Note that this parameter depends on the implicit degree chosen in the computation of discrete approximate implicitization.

Suppose that we want to distinguish the patches of degree $m$ from those of greater degree. A statistical approach to infer such thresholds is the following. 

\setcounter{thm}{1}
\begin{alg}
\label{alg:DegreePartitioning}
Given the implicit degree of interest $m$.
\begin{enumerate}
\item Generate:
\begin{itemize}
\item $Q_1\gg 1$ random patches of implicit degree $m$; 
\item $Q_2\gg 1$ random patches of implicit degree $m+1$.
\end{itemize} 
\item For each of these two sets, average the smallest singular values computed by applying discrete approximate implicitization of degree $m$ on each patch. Denote these two values with $\overline{\xi}_1$ and $\overline{\xi}_2$. 
\item Finally, compute $\xi^{(m)}$ as the geometric mean of $\overline{\xi}_1$ and $\overline{\xi}_2$, i.e.,
\[\xi^{(m)} = \sqrt{\overline{\xi}_1\overline{\xi}_2}.\]
\end{enumerate}
\end{alg}
The use of a geometric mean allows computation of a value whose exponent in the scientific form is intermediate to the ones of $\overline{\xi}_1$ and $\overline{\xi}_2$. Notice that in applications such as CAD, curves and surfaces have often low implicit degree (e.g. degree 1 and 2 for natural quadrics). As we will see in Section \ref{Theoreticalanalysis}, the estimation of the implicit degree can fail in floating-point arithmetic.

\setcounter{thm}{4}
\subsection{An agglomerative approach}
\label{AgglomerativeApproach}
As previously mentioned, hierarchical algorithms are subdivided into agglomerative and divisive hierarchical algorithms.
The agglomerative approach starts with each object belonging to a separate cluster. Then a sequence of irreversible steps is taken to construct a hierarchy of clusters.

In our case, $X$ is the set of patches to cluster. We want to reach a partition where patches in the same cluster are the only ones originating from the same primitive shape. We propose to derive the dissimilarity measure from discrete approximate implicitization. Since we have already partitioned $X=\cup_i X_i$ according to the suspected implicit degree of the patches, we can assume that all the patches in $X$ have same degree $m$. Starting from each patch in a single cluster, at each step the two clusters with smallest dissimilarity are merged.

As a first step to define a dissimilarity on $X$, we identify each patch with a point cloud of $N$ points sampled on its parametrization, where $N\ge N_{\text{min}}$ and $N_{\text{min}}$ is the constant of Remark \ref{rmk:DegreeEstimation}. Typically a uniform sampling scheme is chosen. This assumption will guarantee the minimum number of samples for a unique exact implicitization when considering a pair of patches lying on the same primitive shape. Let $\ddot{X}$ be the set of such point clouds, following the indexing and partitioning of $X$. Notice that the points can be chosen such that the patches in $X$ are in 1-1 correspondence to the point clouds of $\ddot{X}$, i.e.,
\[X\ni\tau \xleftrightarrow{1-1} \ddot{\tau}:=\{P_1^{\tau},\dots,P_N^{\tau}\}\in\ddot{X}.\]

\begin{defn}[Family of candidate dissimilarities $d_\lambda$]
\label{defn:Candidated}
Let $\lambda\ge 0$. For each pair of patches $\tau_1$ and $\tau_2$ in $X$, let $\ddot{\tau}_1$ and $\ddot{\tau}_2$ be their respective point clouds in $\ddot{X}$.
We define \[d_\lambda(\tau_1,\tau_2):=\sigma_{\min}^{(m)}\left(\ddot{\tau}_1\cup\ddot{\tau}_2\right)+\lambda||\mathbf{r}_{\text{CM}}(\ddot{\tau}_1)-\mathbf{r}_{\text{CM}}(\ddot{\tau}_2)||_2,\]
where 
\begin{itemize}
\item $\lambda$ is a regularization parameter.
\item $\sigma_{\min}^{(m)}\left(\ddot{\tau}_1\cup\ddot{\tau}_2\right)$ is the smallest singular value computed by applying discrete approximate implicitization of degree $m$ to the point cloud $\ddot{\tau}_1\cup\ddot{\tau}_2$.
\item $||\mathbf{r}_{\text{CM}}(\ddot{\tau}_1)-\mathbf{r}_{\text{CM}}(\ddot{\tau}_2)||_2$ is the Euclidean distance between the center of masses of the two point clouds.
\end{itemize}
\end{defn}

\begin{lem}
\label{lem:ProofDiss}
Let $X$ be a set of patches such that the centers of masses of the elements of $X$ are distinct. Let $d_\lambda$ be the map defined in Definition \ref{defn:Candidated}. Then:
\begin{enumerate}[i)]
\item The map $d_\lambda$ is a dissimilarity $\iff$ $\lambda>0$. 
\item There exists $\lambda^\ast>0$ such that  $d_\lambda$ is a distance $\iff$ $\lambda\ge\lambda^\ast$.
\end{enumerate}
\end{lem}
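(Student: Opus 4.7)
The plan is to verify the axioms directly, exploiting that $d_\lambda$ splits as $f + \lambda g$, where $f(\tau_1,\tau_2) := \sigma_{\min}^{(m)}(\ddot\tau_1 \cup \ddot\tau_2) \ge 0$ and $g(\tau_1,\tau_2) := \|\mathbf{r}_{\text{CM}}(\ddot\tau_1) - \mathbf{r}_{\text{CM}}(\ddot\tau_2)\|_2$ is the Euclidean distance between centres of mass. Symmetry and non-negativity of both summands are immediate for every $\lambda \ge 0$, so the genuine content sits in identity of indiscernibles for (i) and the triangle inequality for (ii).

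For (i), the direction $\tau_1 = \tau_2 \Rightarrow d_\lambda = 0$ is trivial. For the converse with $\lambda > 0$, vanishing of a sum of two non-negative numbers forces the Euclidean term to vanish, so $\mathbf{r}_{\text{CM}}(\ddot\tau_1) = \mathbf{r}_{\text{CM}}(\ddot\tau_2)$, which by hypothesis yields $\tau_1 = \tau_2$. For necessity of $\lambda > 0$ I would exhibit two distinct patches lying on a common primitive of implicit degree $m$ (e.g.\ two disjoint arcs of a single circle): by Lemma~\ref{rmk:Degreecons} applied to the union point cloud, $f = 0$, so $d_0 = 0$ on a non-diagonal pair.

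For (ii), I rewrite the triangle inequality for $d_\lambda$ on a triple $(\tau_1,\tau_2,\tau_3)$ as
$$\lambda\bigl[g(\tau_1,\tau_3) + g(\tau_3,\tau_2) - g(\tau_1,\tau_2)\bigr] \ge f(\tau_1,\tau_2) - f(\tau_1,\tau_3) - f(\tau_3,\tau_2).$$
The bracket on the left is non-negative by the Euclidean triangle inequality, so the left side is monotone non-decreasing in $\lambda$; this yields for free that the admissible set of $\lambda$ is upward-closed. Existence of a positive threshold then reduces, under the implicit finiteness of $X$ and the distinctness of centres of mass, to setting
$$\lambda^* := \max\Bigl\{0,\ \max_{(\tau_1,\tau_2,\tau_3)}\, \tfrac{f(\tau_1,\tau_2) - f(\tau_1,\tau_3) - f(\tau_3,\tau_2)}{g(\tau_1,\tau_3) + g(\tau_3,\tau_2) - g(\tau_1,\tau_2)}\Bigr\},$$
which is finite as a maximum over finitely many ratios with strictly positive denominators. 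Strict positivity of $\lambda^*$ is forced by part (i): $d_0$ already fails to be a dissimilarity and hence cannot be a distance.

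The main obstacle I expect is the degenerate configuration in which three centres of mass are collinear with the middle one strictly between the others, so the $g$-slack is exactly zero and $\lambda$ has no leverage. There the triangle inequality collapses to an intrinsic subadditivity $f(\tau_1,\tau_2) \le f(\tau_1,\tau_3) + f(\tau_3,\tau_2)$, which is not a general fact about smallest singular values of stacked collocation matrices (as one sees from three patches lying on two conics meeting at two points). I expect the argument either to restrict to generic configurations of $X$ (no three centres collinear, which is generic in CAD settings) or to exploit the Gram identity $M_{13}^\top M_{13} + M_{23}^\top M_{23} = M_{12}^\top M_{12} + 2\,A_3^\top A_3$ relating the three collocation matrices through the block $A_3$ of $\ddot\tau_3$, so as to recover the missing subadditivity in the collinear case.
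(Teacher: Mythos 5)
Your argument is essentially the paper's own proof: symmetry and non-negativity are noted as immediate, identity of indiscernibles follows from the distinct-centres hypothesis once $\lambda>0$ forces the Euclidean term to vanish, and for the triangle inequality the paper likewise rewrites it for each triple as a lower bound $\lambda\ge\lambda_{\tau_1,\tau_2,\tau_3}$ and sets $\lambda^\ast:=\max_{\tau_1,\tau_2,\tau_3\in X}\lambda_{\tau_1,\tau_2,\tau_3}$, finite because there are finitely many triples. The degenerate case you flag --- three centres of mass collinear with the middle one between the other two, so that the slack $\lVert\mathbf{r}_{\text{CM}}(\ddot{\tau}_1)-\mathbf{r}_{\text{CM}}(\ddot{\tau}_3)\rVert_2+\lVert\mathbf{r}_{\text{CM}}(\ddot{\tau}_2)-\mathbf{r}_{\text{CM}}(\ddot{\tau}_3)\rVert_2-\lVert\mathbf{r}_{\text{CM}}(\ddot{\tau}_1)-\mathbf{r}_{\text{CM}}(\ddot{\tau}_2)\rVert_2$ vanishes --- is not excluded by the paper's hypothesis either: the paper merely asserts the ratio is ``well-defined by the triangle inequality for the Euclidean distance'', which only yields non-negativity of the denominator, so your concern points at a tacit genericity assumption in the published proof rather than at a defect of yours. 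Be aware, though, that your proposed Gram-identity rescue does not close this case: the identity $M_{12}^{T}M_{12}=M_{13}^{T}M_{13}+M_{23}^{T}M_{23}-2A_3^{T}A_3$ is correct, but Weyl's inequality bounds $\lambda_{\min}$ of a sum from \emph{below}, not above, and subadditivity of $\sigma_{\min}$ over stacked collocation matrices is false in general (as your own two-conics heuristic suggests); so the honest repair is the one the paper implicitly makes, namely excluding such collinear triples (or, equivalently, assuming a strictly positive Euclidean slack for every triple in $X$).
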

\begin{proof}\begin{enumerate}[i)]
\item It is obvious that $d_\lambda$ is non-negative and symmetric for any $\lambda\ge0$. Let's then prove the identity of indiscernibles. Let $\tau_1,\tau_2$ be a pair of patches in $X$. Then 
\[\begin{split}
d_\lambda(\tau_1,\tau_2)=0 &\iff \sigma_{\min}^{(m)}\left(\ddot{\tau}_1\cup\ddot{\tau}_2\right)+\lambda||\mathbf{r}_{\text{CM}}(\ddot{\tau}_1)-\mathbf{r}_{\text{CM}}(\ddot{\tau}_2)||_2=0\\ &\iff \begin{cases} \sigma_{\min}^{(m)}\left(\ddot{\tau}_1\cup\ddot{\tau}_2\right)=0 \\ \lambda||\mathbf{r}_{\text{CM}}(\ddot{\tau}_1)-\mathbf{r}_{\text{CM}}(\ddot{\tau}_2)||_2=0 \end{cases},
\end{split}\]
where the last equivalence arises from the non-negativity of the two terms. Notice that:
\begin{itemize}
\item The smallest singular value $\sigma$ is zero iff $\tau_1$ and $\tau_2$ lie on the same hypersurface of degree $m$.
\item $\lambda||\mathbf{r}_{\text{CM}}(\ddot{\tau}_1)-\mathbf{r}_{\text{CM}}(\ddot{\tau}_2)||_2=0$ iff the patches have the same center of mass or $\lambda=0$. Since the first case is excluded by hypothesis, the lemma is proved. 
\end{itemize}
\item Let $\tau_1$, $\tau_2$, $\tau_3$ be three patches in $X$. Then $d(\tau_1,\tau_2)\le d(\tau_1,\tau_3)+d(\tau_1,\tau_2)$ iff  
\[\lambda\ge\dfrac{\sigma_{\min}^{(m)}\left(\ddot{\tau}_1\cup\ddot{\tau}_2\right)-\sigma_{\min}^{(m)}\left(\ddot{\tau}_1\cup\ddot{\tau}_3\right)-\sigma_{\min}^{(m)}\left(\ddot{\tau}_2\cup\ddot{\tau}_3\right)}{\sum_{i=1}^2||\mathbf{r}_{\text{CM}}(\ddot{\tau}_i)-\mathbf{r}_{\text{CM}}(\ddot{\tau}_3)||_2-||\mathbf{r}_{\text{CM}}(\ddot{\tau}_1)-\mathbf{r}_{\text{CM}}(\ddot{\tau}_2)||_2}=:\lambda_{\tau_1,\tau_2,\tau_3},\]
which is well-defined by the triangle inequality for the Euclidean distance and since distinct patches have distinct center of masses by assumption. Hence, it follows that the triangular inequality holds (only) for $d_\lambda$ with 
\[\lambda \geq \lambda^\ast:=\max_{\tau_1,\tau_2,\tau_3\in X}\lambda_{\tau_1,\tau_2,\tau_3}. \qedhere\]
\end{enumerate}
\end{proof}

\begin{rmk} Notice that:
\begin{enumerate}[i)] 
\item Any pair of patches $\tau_1$ and $\tau_2$ belongs to the same primitive shape iff $\sigma_{\min}^{(m)}\left(\ddot{\tau}_1\cup\ddot{\tau}_2\right)=0$ or, equivalently,  iff $d_0(\tau_1,\tau_2)=0$.
\item A parameter $\lambda>0$ is required to exploit the theory of clustering analysis. On the other hand, $\lambda$ should penalize the term $||\mathbf{r}_{\text{CM}}(\ddot{\tau}_1)-\mathbf{r}_{\text{CM}}(\ddot{\tau}_2)||_2$ in order to preserve the behavior of $d_0$ described i). We can choose $\lambda\approx 0$, e.g. $\lambda=10^{-10}$, to approximate $d_0$ by a dissimilarity. We remark that this constant is typically much smaller than $\lambda^\ast$.
\item The assumption in Lemma \ref{lem:ProofDiss} is reasonable for applications such as CAD, where patches do not overlap.
\end{enumerate}
\end{rmk}

The dissimilarities between pairs of patches are stored in the $|X|\times|X|$ matrix $\mathbf{D}_X$, known as the \emph{dissimilarity matrix}.

Now that we have defined a dissimilarity between elements, we extend it to clusters by defining the map $D_{\lambda}:2^X\times2^X\to[0,+\infty)$ using a complete-linkage approach:
\[D_{\lambda}(C_i,C_j):=\max_{\tau_k\in C_i,\tau_l\in C_j}d_{\lambda}(\tau_k,\tau_l).\]
The main reason for choosing complete-linkage is its relatively low computational cost, since the dissimilarity matrix at step $k$ is just a submatrix of $\mathbf{D}_X$. In addition numerical results show that it works well in practice.

Finally, starting from each patch in a separate cluster, we merge at each step the pair of clusters with smallest dissimilarity $D_{\lambda}$. The merging continues until the correct number of primitive shapes is detected, using the stopping criterion described in the next section.

\subsection{Stopping criterion for the agglomerative approach}
\label{StoppingCriterion}
How should one estimate the final number of clusters?  We propose to define a stopping criterion considering the map $d_0$ as follows:
\begin{itemize}
\item At each iteration $k$ we compute, for each cluster, the maximum value of $d_{0}$ for pairs of patches. From a numerical viewpoint, it corresponds to an empirical estimation of the maximum error in the approximate implicitization of the patches.
\item We consider then the maximum of the maxima and denote it by $e^{(k)}$. We will refer to $e^{(k)}$ as the \emph{representation error at iteration} $k$.
\end{itemize}
Suppose $L$ is the number of underlying primitives and let $P:=|X|$ be the number of patches. Then:
\begin{itemize}
\item In exact arithmetic, $e^{(k)}=0$ for $k=1,\dots,P-L$ and $e^{(k)}>0$ for $k=P-L+1,\dots,N-1$. Therefore the number of iterations can be defined as the maximum index $k$ such that $e^{(k)}=0$, i.e.,
\begin{equation}
\label{equation:number_iterations_exact_arithmetics}
    \bar{k}:=\max\{k|e^{(k)}=0\}.
\end{equation}

\item In floating-point arithmetic, Equation \ref{equation:number_iterations_exact_arithmetics} does not, in general, return any integer. We thus aim at designing a stopping criterion that can handle round-off error. We propose to estimate the number of iterations $\bar{k}$ as the index where the representation error jumps significantly for the first time, i.e.,
\[\bar{k}:=\min\{k|e^{(k)}>\eta\}.\]
\end{itemize}

One way to choose the stopping tolerance $\eta$ is to proceed similarly to Algorithm~\ref{alg:DegreePartitioning}:

\setcounter{thm}{2}
\begin{alg}
\label{alg:StoppCriteta}
Given the maximum implicit degree of interest $m_{\max}$:
\begin{enumerate}
\item Generate $P_3\gg1$ sets $\{\mathscr{D}_i\}_{i=1}^M$ of random patches of implicit degree between $1$ and $m_{\max}$.
\item Compute the number of iterations $\bar{k}_i$ for an exact clustering of each $\mathscr{D}_i$, considering that the number of clusters for training sets is known.
\item Compute, for each $\mathscr{D}_i$, the respective representation error $e_i$ at iteration $\bar{k}_i$; 
\item Define $\eta:=\left(\min{e_i}\right)^2$.
\end{enumerate}
\end{alg}


Such an empirical threshold $\eta$ lies between $0$ and the smallest representation error of an incorrect representation. Whenever $e^{(k)}$ is smaller than $\eta$, the algorithm will proceed by joining the clusters having this smallest dissimilarity. Otherwise, the algorithm will stop.

As an alternative to this stopping criterion based on absolutes, one may prefer the following relative stopping criterion.
\begin{alg}
\label{alg:StoppCriteta2}
Given a sequence $e^{(k)}$, $k=1,\dots,P-1$, of representation errors:
\begin{enumerate}
\item Set $e^{(0)}:=e^{(1)}$.
\item Define $\tilde{e}^{(k)}:=e^{(k)}/e^{(k-1)}$, for $k=1,\dots,N-1$.
\item Define $\bar{k}:=\left(\arg\max_k\tilde{e}^{(k)}\right)-1$.
\end{enumerate}
\end{alg}

Additional details on the use of these two criteria are provided in Section \ref{Examples}.

\newpage
\setcounter{algocf}{4}
\section{Sketch of the algorithm for the detection of primitive shapes}
\label{SketchOfTheAlgorithm}
We now present a sketch of the complete detection algorithm described earlier in this section. The input consists of a finite set of patches $X$ lying on different manifolds and having different centers of mass. As output, the algorithm returns the partition of $X$ corresponding to the manifolds the patches come from. \\

First, we partition $X$ according to the implicit degree of the patches as described in Section \ref{CalibrationPreProcessing}. The tuning parameters $\{\xi^{(m)}\}$ are computed using Algorithm \ref{alg:DegreePartitioning}. The maximum implicit degree of the patches $m_{\max}$ is returned as a result of this first step. 

\begin{algorithm}
\DontPrintSemicolon 
\KwIn{The set of polynomial patches $X$.}
\KwOut{The partition $X_1,\dots,X_{m_{\max}}$ according to the implicit degree.}
Set $m:=0$ (degree for discrete approximate implicitization);\;
\While{$|X|>0$}{
	Set $m:=m+1$ (increment the degree for approximate implicitization);\;
	Compute $\xi^{(m)}$ as defined in Algorithm \ref{alg:DegreePartitioning};\;
 	Set $X_m:=\emptyset$;\;
   	\For{$\tau\in X$} {
      		\If{$\sigma_{\min}^{(m)}(\tau)<\xi^{(m)}$} {
         		Set $X_m:=X_m\cup\{\tau\}$;\;
         		Set $X:=X\backslash\{\tau\}$;\;
        	}
   	}
}
Set $m_{\max}:=m$;\;
\Return{$X_1,\dots,X_{m_{\max}}$}\;
\caption{Partition of the set of patches $X$ according to their implicit degrees.}
\label{algo:X_partitioning}
\end{algorithm}

Finally, we apply the agglomerative clustering approach described in Section \ref{AgglomerativeApproach} to each of the subsets $X_i$. We will denote with $X_i=\cup_j X_{i,j}^{(k)}$ the partition of the cluster $X_i$ into the clusters $X_{i,j}^{(k)}$ at step $k$, and with $X_i=\cup_j X_{i,j}$ the final partition corresponding to the primitive shapes. 

The tolerance $\eta$ is assumed to be previously computed as in Section \ref{StoppingCriterion}. We recall that in the agglomerative approach each patch constitutes a single cluster at the initial step. 

\begin{algorithm}
\DontPrintSemicolon 
\KwIn{$\left|\begin{array}{ll} X=\cup_iX_i & \mbox{the initial partition according to the implicit degree.} \\ \eta & \mbox{the threshold for the stopping criterion.}
\end{array}\right.$}
\KwOut{The final partition $X=\cup_{i,j}X_{i,j}$ according to the underlying primitive shapes.}
\For{$i=1:m_{\max}$}{
	Set $P_i:=|X_i|$ (number of clusters at step 0);\;
	\If{$P_i>1$}{
	\nonl \textbf{\underline{Initial step:}}\;
	Set $\mathbf{D}_X:=\mathbf{0}_{P_i\times P_i}$ (dissimilarity matrix at step 0);\;
	\For{$j_1=1:P_i$}{
		\For{$j_2=j_1+1:P_i$}{
			Set $\mathbf{D}_X(j_1,j_2):=d_{\lambda}\left(\tau_{j_1},\tau_{j_2}\right)$.
		}
	}
	$\mathbf{D}_X:=\mathbf{D}_X+\mathbf{D}_X^T$;\;
	Define the partition $X_i=\cup_{j=1}^{P_i}X_{i,j}^{(0)}$, where each cluster $X_{i,j}^{(0)}$ contains one and only one patch (agglomerative approach);\;
	
	\nonl \textbf{\underline{Agglomerative process:}}\;
	Set $k:=0$ and $e^{(0)}:=0$;\;
	\While{$e^{(k)}<\eta$}{
		Set $P_i^k:=|X_i|$ (number of clusters at step $k$);\;
		\uIf{$k=0$}{
			Set $\mathbf{D}_X^0:=\mathbf{D}_X$ (dissimilarity matrix at step $0$);\;
		}
		\uElse{
			Set $\mathbf{D}_X^k:= \mathbf{0}_{P_i^k\times P_i^k}$ (dissimilarity matrix at step $k$);\;
			\For{$j_1=1:P_i^k$}{
				\For{$j_2=j_1+1:P_i^k$}{
					Set $\mathbf{D}_X^k(j_1,j_2):=D_{\lambda}(X_{i,j_1}^{(k)},X_{i,j_2}^{(k)})$;\;
				}
			}
		$\mathbf{D}_X^k:=\mathbf{D}_X^k+\left(\mathbf{D}_X^k\right)^T$;\;
		}	
	Find the pair of clusters $X_{i,\bar{j_1}}^{(k)}$ and $X_{i,\bar{j_2}}^{(k)}$ minimizing $D_{\lambda}$;\;
	Set the partition for step $k+1$ by merging $X_{i,\bar{j_1}}^{(k)}$ and $X_{i,\bar{j_2}}^{(k)}$ ;\;
	Compute $e^{(k+1)}$ as described in Section \ref{StoppingCriterion};\;
	Set $k:=k+1$;\;
	}
	}
}
\Return{$X=\cup X_{i,j}^{(k-1)}$}\;
\caption{Partition of the set of patches $X$ corresponding to the underlying primitive shapes.}
\label{algo:X_partitioning_primitive_shape}
\end{algorithm}

\setcounter{thm}{6}

\subsection{Computational complexity}
\label{computational_complexity}
In this section we analyze the computational complexity of the proposed algorithm in the case of curves and using approximate implicitization with total degree at most $m$.

\subsubsection{Complexity of approximate implicitization}
We start by determining the computational complexity for approximate implicitization. For curves in $\mathbb{R}^2$, the collocation matrix $\mathbf{D}$ \eqref{eqn:CollocationMatrix} has the form $(\pi_j(\bfp_i))_{i=1,j=1}^{N,M}$, involving:
\begin{itemize}
\item $N$ points $\bfp_1,\ldots, \bfp_N$ sampled from one or more segments.
\item $M$ polynomials $\pi_1,\ldots,\pi_M$ spanning the space $\mathbb{R}_m[x,y]$ of bivariate polynomials of total degree at most $m$. In particular, 
\begin{equation}\label{eqn:n} M:=\dim \mathbb{R}_m[x,y]=\dfrac{(m+1)(m+2)}{2}.\end{equation}
\end{itemize}
The computational complexity of approximate implicitization is the result of two contributions:
\begin{itemize}
\item 
For the monomial basis considered in this paper, using a triangular scheme to assemble the entries of $\mathbf{D}$ reduces the computational complexity to $\OOO(Nm^2)$.
\item The singular value decomposition of an $N\times M$ matrix has computational complexity $\OOO(\min(NM^2, N^2M))$ \cite{BENES20188}. In our setting, this yields
\begin{equation*}
    \OOO(\min(NM^2, N^2M))
    = \OOO(\min(Nm^4, N^2m^2))
\end{equation*}
\end{itemize}

In case of points lying exactly on parametric curves, we can (without loss of generality) set $N:=N_{\text{min}}$, where $N_{\text{min}}$ is the minimum number of samples that guarantees a unique exact implicitization. By assuming the parametric planar curve to be rational non-degenerate, $N_{\text{min}}=m^2+1$ (see Remark \ref{rmk:DegreeEstimation}), which leads to a total computational cost of $\OOO(m^6)$. Although approximate implicitization exhibits high complexity with respect to patch degree, we are mainly concerned with low degree patches in this work, as motivated in the introduction. Due to this, the total computational time is dominated by the number of patches $P=\sum_{m=1}^{m_{\max}}P_m,$ to be clustered.

\subsubsection{Estimating the degree of the patches}
Algorithm \ref{algo:X_partitioning} runs approximate implicitization of degree $m$ a total of ${P-\sum_{k=0}^{m-1}P_k}$ times, where we set $P_0:=0$ and where $m=1,\ldots,m_{\max}$. Thus the routine for estimation of degree has $\OOO(P)$ complexity in the number of patches. This result is independent of the dimension of the hypersurfaces to be clustered.

\subsubsection{Assembling the dissimilarity matrix}
Considering the degree of the patches to be constant (e.g. $m_{\max}$ in the worst case), the complexity of assembling the dissimilarity matrix is proportional to the number of elements in the matrix, that is $\OOO(P^2),$ regardless of the dimension of the hypersurfaces to be clustered. It may be noted, moreover, that each element of the dissimilarity matrix can be computed independently, implying that the matrix assembly is highly parallelizable (\emph{embarrassingly parallel}).   

\subsubsection{Clustering procedure}

The naive implementation of agglomerative hierarchical clustering requires $\OOO(P^3)$ operations \cite{Day.Edelsbrunner1984}. However, even with this implementation, for all cases we have encountered so far, the assembly of the dissimilarity matrix requires most of the computational time. The dominance of the assembly procedure is even more prominent for surfaces in $\mathbb{R}^3.$ Thus, in practice, the number of patches required before the clustering step becomes dominant is excessively high. If the clustering of such a large number of patches is required, it is also possible to exploit the complete-linkage approach to implement the clustering procedure with $\OOO(P^2)$ complexity \cite{Defays1977}.



\section{Theoretical analysis}
\label{Theoreticalanalysis}

In this section we state three propositions on the stability and robustness of discrete approximate implicitization under scaling, translation and rotation when the monomial basis is considered. The proofs are given in the appendix. These results of the study are generalizable to hypersurfaces of $\mathbb{R}^n$.

\begin{prop}[Scaling and smallest singular value]
\label{prop:scaling}
Let \[\mathbf{p}_{\mathbf{a}}(t)=\big(a_1x(t),a_2y(t)\big), \qquad t\in [a,b]\subset\mathbb{R},\] 
be a family of scaled polynomial or rational parametric segments, where $\mathbf{a} = (a_1,a_2)\in \mathbb{R}^2$ with $a_1,a_2>0$. Let $\mathscr{P}_{\mathbf{a}}:=\{\mathbf{p}_{\mathbf{a}}(t_j)\}_j$ be a family of uniformly sampled point clouds. Then
\[\lim_{(a_1,a_2)\to (0,0)}\sigma_{\min}^{(m)}\left(\mathscr{P}_{\mathbf{a}}\right)=0.\]
\end{prop}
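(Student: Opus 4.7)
The plan is to exploit the variational characterization
\[
\sigma_{\min}^{(m)}(\mathscr{P}_{\mathbf{a}}) \;=\; \min_{\|\mathbf{b}\|_2=1}\|\mathbf{D}_{\mathbf{a}}\mathbf{b}\|_2
\]
used in the proof of Proposition~\ref{prop:UpperBoundAE}, and to exhibit an explicit unit vector $\mathbf{b}$ along which $\|\mathbf{D}_{\mathbf{a}}\mathbf{b}\|_2$ is driven to zero by the scaling. Since the upper bound $\sigma_{\min}^{(m)}(\mathscr{P}_{\mathbf{a}})\le\|\mathbf{D}_{\mathbf{a}}\mathbf{b}\|_2$ is uniform in $\mathbf{b}$, this will suffice.

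First I would write the collocation matrix in the monomial basis $\{\pi_j(x,y)=x^{\alpha_j}y^{\beta_j}\}_{j=1}^M$ with $\alpha_j+\beta_j\le m$. Its entries are
\[
(\mathbf{D}_{\mathbf{a}})_{ij} = \pi_j\bigl(\mathbf{p}_{\mathbf{a}}(t_i)\bigr) = \bigl(a_1 x(t_i)\bigr)^{\alpha_j}\bigl(a_2 y(t_i)\bigr)^{\beta_j} = a_1^{\alpha_j}a_2^{\beta_j}\,x(t_i)^{\alpha_j}y(t_i)^{\beta_j},
\]
so column $j$ of $\mathbf{D}_{\mathbf{a}}$ is obtained from the corresponding column of the unscaled matrix $\mathbf{D}_{(1,1)}$ by multiplying by $a_1^{\alpha_j}a_2^{\beta_j}$. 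In particular, every column associated to a monomial of positive total degree carries a factor that tends to $0$ as $(a_1,a_2)\to(0,0)$.

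Next, since any non-trivial choice requires $m\ge 1$, the monomials $\pi=x$ and $\pi=y$ appear in the basis. Letting $\mathbf{b}_x$ denote the standard unit vector selecting the column of $\mathbf{D}_{\mathbf{a}}$ corresponding to $\pi=x$ (and similarly $\mathbf{b}_y$ for $\pi=y$), the computation above gives
\[
\|\mathbf{D}_{\mathbf{a}}\mathbf{b}_x\|_2 = a_1\Bigl(\sum_i x(t_i)^2\Bigr)^{1/2} =: a_1 C_x, \qquad \|\mathbf{D}_{\mathbf{a}}\mathbf{b}_y\|_2 = a_2 C_y,
\]
where $C_x,C_y$ are constants independent of $\mathbf{a}$ because $x,y$ are continuous on the compact interval $[a,b]$. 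Therefore
\[
\sigma_{\min}^{(m)}(\mathscr{P}_{\mathbf{a}}) \;\le\; \min\{a_1 C_x,\;a_2 C_y\} \;\xrightarrow[(a_1,a_2)\to(0,0)]{}\; 0,
\]
and combined with non-negativity of singular values this yields the claim.

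I do not anticipate any real obstacle here: everything rests on the simple observation that column-scaling of $\mathbf{D}$ in the monomial basis passes through to the smallest singular value via the min--max characterization. The only mildly delicate point is the degenerate sub-case $x\equiv 0$ or $y\equiv 0$ on the sample, in which either $C_x$ or $C_y$ vanishes; but then $\sigma_{\min}^{(m)}(\mathscr{P}_{\mathbf{a}})=0$ identically for every $\mathbf{a}$, and the conclusion is trivially true. A brief remark handling this case completes the proof.
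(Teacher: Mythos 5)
Your proof is correct, but it takes a genuinely different and more elementary route than the paper. The paper factors the collocation matrix as $\mathbf{D}_{\mathbf{a}}=\mathbf{D}_{1,1}\mathbf{S}_{a_1,a_2}$ with a diagonal scaling matrix, passes to the Moore--Penrose inverse $\mathbf{D}_{\mathbf{a}}^{\dagger}=\mathbf{S}_{a_1,a_2}^{-1}\mathbf{D}_{1,1}^{\dagger}$, and shows via the trace of $\mathbf{S}_{a_1,a_2}^{-1}\mathbf{C}\,\mathbf{S}_{a_1,a_2}^{-1}$ that the largest eigenvalue (hence largest singular value of the pseudoinverse) blows up as $a_1$ or $a_2\to 0$, so its reciprocal $\sigma_{\min}^{(m)}$ tends to zero; this argument leans on the full-column-rank assumption stated in the appendix, both for the reciprocal relation and for the factorization of the pseudoinverse. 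You instead test the variational characterization $\sigma_{\min}^{(m)}(\mathscr{P}_{\mathbf{a}})=\min_{\lVert\mathbf{b}\rVert_2=1}\lVert\mathbf{D}_{\mathbf{a}}\mathbf{b}\rVert_2$ (already used in Proposition~\ref{prop:UpperBoundAE}) with the coordinate vector selecting the column of the monomial $x$, giving $\sigma_{\min}^{(m)}(\mathscr{P}_{\mathbf{a}})\le a_1 C_x\to 0$. This is simpler, avoids the pseudoinverse machinery entirely, needs no full-rank hypothesis (your remark on the degenerate case $C_x=0$ or $C_y=0$ covers that cleanly), and yields an explicit linear decay rate in $a_1$ (or $a_2$). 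What the paper's heavier setup buys is reusability: essentially the same pseudoinverse/trace argument is recycled verbatim for the translation case (Proposition~\ref{prop:translation}), where your single-column trick would not apply because translation mixes columns rather than rescaling them. Note also that both arguments, like the paper's, are tied to the monomial basis, which is consistent with the standing assumption of Section~\ref{Theoreticalanalysis}.
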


\begin{prop}[Translations and smallest singular value]
\label{prop:translation}
Let \[\mathbf{p}_{\mathbf{a}}(t)=\big(x(t)+a_1,y(t)+a_2\big), \qquad t\in [a,b]\subset\mathbb{R},\] 
be a family of translated polynomial or rational parametric segments, where $\mathbf{a}:=(a_1, a_2)\in\mathbb{R}^2$. Let $\mathscr{P}_{\mathbf{a}}:=\{\mathbf{p}_{\mathbf{a}}(t_j)\}_j$ be a family of uniformly sampled point clouds. Then
\[\lim_{a_k\to \infty}\sigma_{\min}^{(m)}\left(\mathscr{P}_{\mathbf{a}}\right)=0, \qquad k=1,2. \]
\end{prop}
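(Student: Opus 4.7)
The plan is to invoke the variational characterization
\[\sigma_{\min}^{(m)}(\mathscr{P}_{\mathbf{a}}) = \min_{\|\mathbf{b}\|_2 = 1}\|\mathbf{D}(\mathbf{a})\mathbf{b}\|_2,\]
already used inside the proof of Proposition~\ref{prop:UpperBoundAE}, and to furnish a cheap upper bound by exhibiting a single family of coefficient vectors $\mathbf{b}(\mathbf{a})$ whose image under $\mathbf{D}(\mathbf{a})$ stays bounded in norm while $\|\mathbf{b}(\mathbf{a})\|_2 \to \infty$. Conceptually, as $a_k \to \infty$ each monomial $x^i y^j$ evaluated at $\mathbf{p}_{\mathbf{a}}(t)$ is dominated by the pure translation term $a_1^i a_2^j$, so the monomial collocation matrix becomes badly conditioned on $\mathscr{P}_{\mathbf{a}}$; a suitable $\mathbf{b}$ just has to cancel this drift.

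I would treat the case $k = 1$ (the case $k = 2$ being symmetric) and may assume $m \ge 1$, since the $m = 0$ case is vacuous. The natural candidate is the degree-one polynomial $q_{\mathbf{a}}(x,y) = x - a_1$, whose coefficient vector in the monomial basis of $\mathbb{R}_m[x,y]$ is $\mathbf{b}(\mathbf{a}) = (-a_1,\, 1,\, 0,\, \dots,\, 0)^T$ with $\ell^2$-norm $\sqrt{1 + a_1^2}$. Since $q_{\mathbf{a}}(\mathbf{p}_{\mathbf{a}}(t)) = x(t)$, the vector $\mathbf{D}(\mathbf{a})\mathbf{b}(\mathbf{a})$ has entries $x(t_j)$ that do not depend on $\mathbf{a}$, so $\|\mathbf{D}(\mathbf{a})\mathbf{b}(\mathbf{a})\|_2 = C$ for some constant $C \ge 0$ determined only by the samples $\{t_j\}$ and the coordinate function $x(\cdot)$. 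Dividing through gives
\[0 \le \sigma_{\min}^{(m)}(\mathscr{P}_{\mathbf{a}}) \le \frac{\|\mathbf{D}(\mathbf{a})\mathbf{b}(\mathbf{a})\|_2}{\|\mathbf{b}(\mathbf{a})\|_2} = \frac{C}{\sqrt{1 + a_1^2}} \xrightarrow[a_1 \to \infty]{} 0,\]
and a symmetric choice $q_{\mathbf{a}}(x,y) = y - a_2$ settles $k = 2$.

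The obstacle here is conceptual rather than technical: one must recognize that the whole argument hinges on the chosen basis being the monomial basis, which is exactly the hypothesis recalled at the beginning of Section~\ref{Theoreticalanalysis}. A translation-invariant basis would neutralize precisely the asymmetry being exploited, so the result should really be read as a statement about the conditioning of the monomial collocation matrix under translation, not about the geometry of the underlying variety. Once that is accepted, the proof collapses to a one-line exhibit-a-vector computation, and no further estimates are needed beyond the variational formula.
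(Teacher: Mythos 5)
Your proof is correct, but it takes a genuinely different route from the paper. You bound $\sigma_{\min}^{(m)}(\mathscr{P}_{\mathbf{a}})$ from above via the variational characterization $\min_{\lVert\mathbf{b}\rVert_2=1}\lVert\mathbf{D}(\mathbf{a})\mathbf{b}\rVert_2$ (the same identity used in Proposition \ref{prop:UpperBoundAE}) with the explicit test polynomial $x-a_1$ (resp.\ $y-a_2$), whose composition with $\mathbf{p}_{\mathbf{a}}$ cancels the translation, giving the quantitative bound $\sigma_{\min}^{(m)}(\mathscr{P}_{\mathbf{a}})\le C/\sqrt{1+a_1^2}$ with $C$ depending only on the fixed samples $x(t_j)$ --- a bound that is uniform in the other translation parameter, valid for any implicit degree $m\ge 1$, and independent of whether $\mathbf{D}(\mathbf{a})$ has full rank. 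The paper instead factors the collocation matrix as $\mathbf{D}_{a_1,a_2}=\mathbf{D}_{0,0}\mathbf{T}_{a_1,a_2}$, passes to the Moore--Penrose pseudoinverse $\mathbf{D}_{a_1,a_2}^{\dagger}=\mathbf{T}_{-a_1,-a_2}\mathbf{D}_{0,0}^{\dagger}$ (which needs the full-rank hypothesis stated at the start of the appendix and is written out explicitly only for degree $2$), and shows that the trace of $\mathbf{D}_{a_1,a_2}^{\dagger}\bigl(\mathbf{D}_{a_1,a_2}^{\dagger}\bigr)^T$ is a polynomial in $a_1,a_2$ with positive leading coefficients, so $\lambda_{\max}\to\infty$ and hence $\sigma_{\min}\to 0$. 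What your argument buys is brevity, an explicit $O(1/|a_k|)$ decay rate, and independence of the degree and of rank assumptions; what the paper's approach buys is a single algebraic framework (factor through a change-of-basis matrix and study the pseudoinverse) that is reused verbatim for the scaling and rotation propositions, where no obvious translation-cancelling test vector exists. Your closing observation is also consistent with the paper: the phenomenon is a property of the monomial collocation matrix, which is exactly the basis assumed in Section \ref{Theoreticalanalysis}, and your test vector relies on the basis containing $1$ and the linear monomials.
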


\begin{prop}[Rotation and smallest singular value]
\label{prop:rotation}
Let 
\[
\mathbf{p}_{\theta}(t)=
\begin{pmatrix}x(t) & y(t)\end{pmatrix}
\begin{pmatrix}\cos{\theta} & -\sin{\theta}\\  \sin{\theta} & \cos{\theta} \end{pmatrix}, \qquad t\in [a,b]\subset\mathbb{R},
\] 
be a family of polynomial or rational parametric segments rotated about the origin, where $\theta\in[0,2\pi]$. Let $\mathscr{P}_{\theta}:=\{\mathbf{p}_{\theta}(t_j)\}_j$ be a family of uniformly sampled point clouds. Then there exists $\alpha,$ $\beta\in\mathbb{R}_{>0}$ such that  
\[\alpha\le\sigma_{\min}^{(m)}\left(\mathscr{P}_{\theta}\right)\le\beta.\]
\end{prop}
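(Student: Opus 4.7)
The plan is to combine continuity of the smallest singular value on the compact interval $[0,2\pi]$ with the invariance of the polynomial space $\mathbb{R}_m[x,y]$ under rotation. The bulk of the work will be pushing past the continuity/compactness argument to establish that the lower bound is \emph{strictly} positive.

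First I would note that $\mathscr{P}_\theta = R_\theta \mathscr{P}_0$, where $R_\theta$ denotes the rotation matrix. Consequently, the entries $(\mathbf{D}_\theta)_{i,j} = \pi_j(R_\theta \mathbf{p}(t_i))$ of the collocation matrix are polynomials in $\cos\theta, \sin\theta$ and in the fixed sampled values $x(t_i), y(t_i)$. Thus the map $\theta \mapsto \mathbf{D}_\theta$ is continuous (in fact smooth). Since $\sigma_{\min}(\mathbf{A}) = \min_{\lVert \mathbf{b}\rVert_2 = 1} \lVert\mathbf{A}\mathbf{b}\rVert_2$ depends continuously on the entries of $\mathbf{A}$, the composite map $\theta \mapsto \sigma_{\min}^{(m)}(\mathscr{P}_\theta)$ is continuous on $[0,2\pi]$. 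Compactness then guarantees that it attains both its infimum and supremum on this interval; defining $\beta$ as the supremum yields the finite upper bound immediately, so it only remains to show that the infimum is strictly positive.

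The hard part, and essentially the only nontrivial step, is showing $\sigma_{\min}^{(m)}(\mathscr{P}_\theta) > 0$ for every $\theta$. I would exploit the fact that the rotation pullback $R_\theta^{\ast} : q \mapsto q \circ R_\theta$ acts as a linear automorphism of $\mathbb{R}_m[x,y]$, since rotation preserves the total degree of a polynomial. Writing $R_\theta^\ast \pi_j = \sum_k (M_\theta)_{k,j}\,\pi_k$, the resulting matrix $\mathbf{M}_\theta$ has entries that are polynomials in $\cos\theta, \sin\theta$ and is invertible for every $\theta$. A direct computation then gives the factorization
\[
(\mathbf{D}_\theta)_{i,j} = (R_\theta^\ast \pi_j)(\mathbf{p}(t_i)) = \sum_k (\mathbf{D}_0)_{i,k} (M_\theta)_{k,j}, \qquad \text{i.e.,} \qquad \mathbf{D}_\theta = \mathbf{D}_0\,\mathbf{M}_\theta,
\]
so that $\ker \mathbf{D}_\theta = \mathbf{M}_\theta^{-1} \ker \mathbf{D}_0$. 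In particular $\mathbf{D}_\theta$ and $\mathbf{D}_0$ have the same rank, and hence $\sigma_{\min}^{(m)}(\mathscr{P}_\theta) > 0$ if and only if $\sigma_{\min}^{(m)}(\mathscr{P}_0) > 0$. Under the tacit assumption that the unrotated curve is not exactly implicitizable at degree $m$ on the chosen sample set---precisely the regime in which Lemma \ref{rmk:Degreecons} is invoked nontrivially---we have $\sigma_{\min}^{(m)}(\mathscr{P}_0) > 0$, so the positive minimum $\alpha$ exists and we obtain the desired bound $\alpha \le \sigma_{\min}^{(m)}(\mathscr{P}_\theta) \le \beta$.
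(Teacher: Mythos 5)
Your argument is correct and follows essentially the same route as the paper: both factor the collocation matrix as $\mathbf{D}_\theta=\mathbf{D}_0\,\mathbf{M}_\theta$ with $\mathbf{M}_\theta$ the (invertible, polynomial-in-$\cos\theta,\sin\theta$) matrix of the rotation pullback on $\mathbb{R}_m[x,y]$, and then combine continuity in $\theta$, compactness of $[0,2\pi]$, and the full-rank (non-exactly-implicitizable) assumption to get a strictly positive minimum and a finite maximum. The only difference is cosmetic: the paper singles out the degree-one case, where $\mathbf{M}_\theta$ is orthogonal and the singular value is exactly invariant, while your uniform argument covers both cases at once.
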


These propositions imply that the algorithm is not guaranteed correct in floating-point arithmetic, since the smallest singular value can be small enough that the weak condition (\ref{eqn:WeakerImplicitDegree}) or the stopping criterion of Section \ref{StoppingCriterion} fail. However, these issues can be mitigated by performing further preprocessing on the data (rescaling to a fixed size, calibrating, etc.).  The next section shows that the algorithm works well in practice.

\section{Experimental results}
\label{Examples}
Our algorithm has been tested on two- and three-dimensional examples. We first describe its behavior for segments originating from lines and conics in a plane, then we study its correctness, robustness and efficiency when applied to line segments and B\'ezier approximations of circular segments, and finally we test its correctness on a real-world industrial example of a 3D CAD model.

\subsection{Straight lines and conics}
Let $\mathscr{F}_c=\{\mathscr{C}_i\}_{i=1}^L$ be a family of $L$ curves, where the type of each curve (e.g. straight line, parabola, ellipse, hyperbola) is randomly chosen with equal probability. Each curve $\mathscr{C}_i$ is obtained as follows: random parameters are sampled to define a rotation about the origin, a dilation and a translation of the canonical curve chosen as representative of the type of $\mathscr{C}_i$. A family of $P_i$ continuous subsegments is sampled. The extracted segments are gathered in the set $X$. The method is tested on the set $X$, which, after a suitable rescaling, is contained in the region $[-1,1]\times[-1,1]$. An example is shown in Figure \ref{fig:Ex1}.

\begin{figure}[htp]
\begin{minipage}[b]{6.5cm}
\centering
\includegraphics[scale=0.36, clip = true, trim = 30 20 0 40]{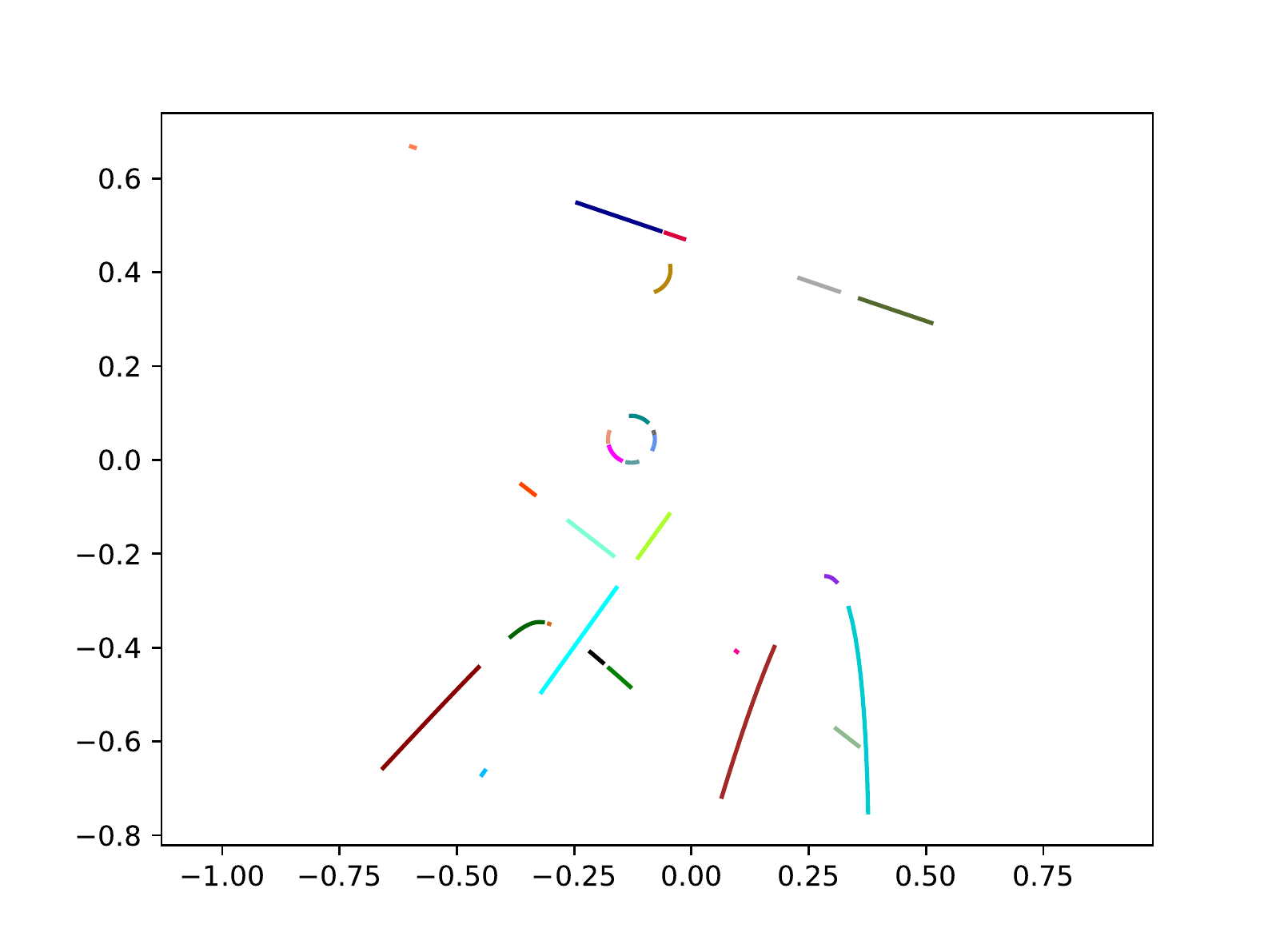}
\end{minipage}
\ \
\begin{minipage}[b]{6.5cm}
\centering
\includegraphics[scale=0.36, clip = true, trim = 30 20 0 40]{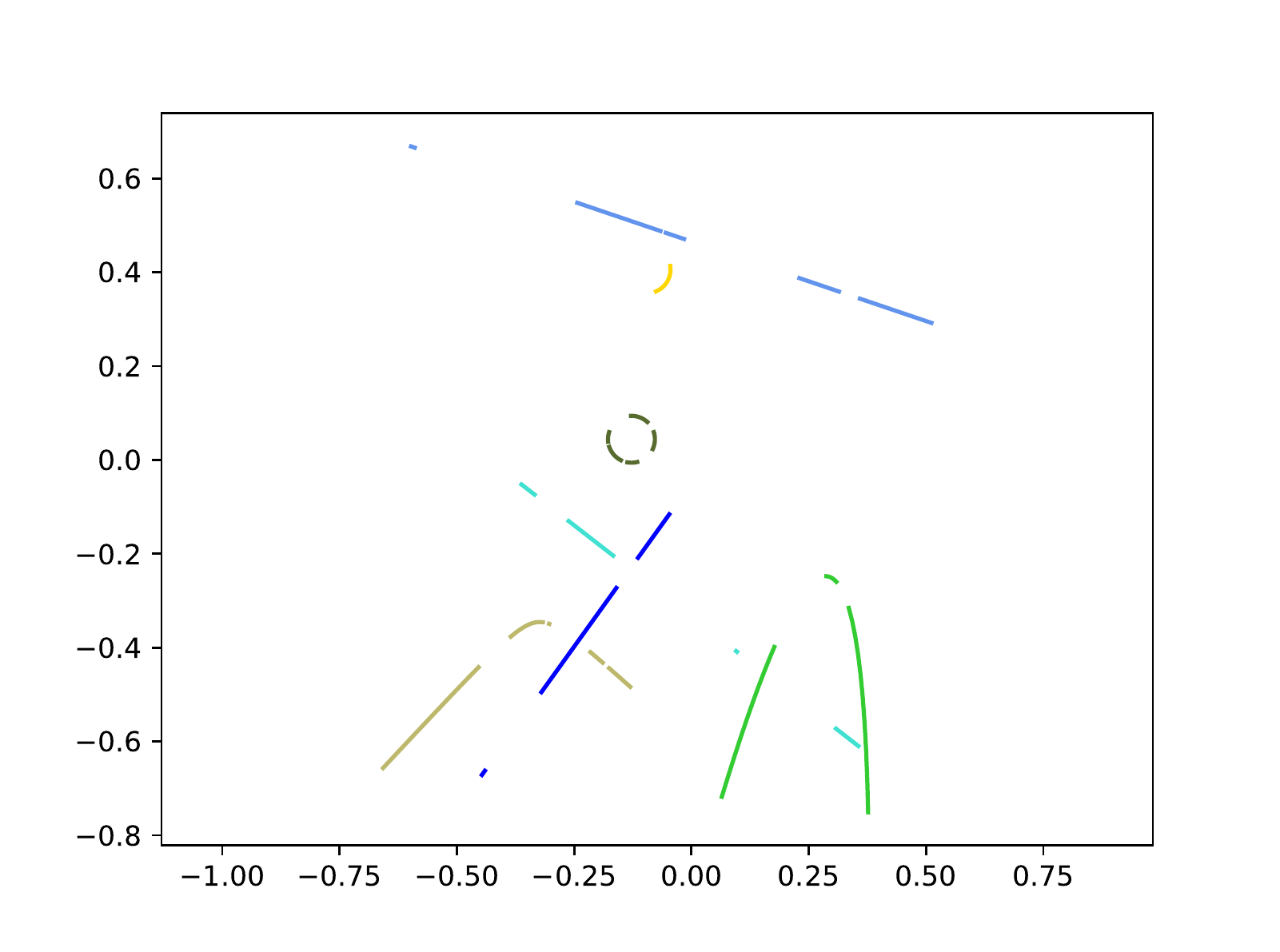}
\end{minipage}
\caption{Initial (left) and corresponding classified dataset (right), with segments colored by cluster.}
\label{fig:Ex1}
\end{figure}

We consider the stopping tolerance presented in Algorithm \ref{alg:StoppCriteta}, as it has experimentally shown to work well with data affected by the only round-off error. The algorithm is run $10^5$ times to compute the average misclassification rate of the approach. This index is given by the ratio between the number of misclassified segments and the total number of segments, averaged over the number of times the algorithm is run. The average misclassification rate is $2.14\cdot10^{-2}$. Equivalently, the $97.86\%$ of the segments are on average correctly classified. Notice that the inaccuracy can be explained by the randomness of the dataset, where some of the dilations can compromise the degree estimation or the stopping criteria (see Section \ref{Theoreticalanalysis} for details about the robustness of discrete approximate implicitization). 

\subsection{Cubic B\'ezier curve approximations to circular segments}
\label{Gear2D}
Let $X$ be the set of segments of the two-dimensional gear in Figure \ref{fig:Ex2}. The gear is built as follows: 
\begin{itemize}
\item Three concentric circles with radii $1, 1.5$ and 2 are approximated by means of cubic B\'ezier curves. Notice that the segments do not lie exactly on the underlying primitives.
\item Concentric line segments are used to connect inner and outer arcs to form the teeth.
\end{itemize}
Again, we apply the algorithm in order to detect the primitive shapes underlying the model. We here use the stopping criteria introduced in Algorithm \ref{alg:StoppCriteta2}, because experimentally more robust when working with polynomial B\'ezier approximations. The resulting output shows the correct detection of the clusters.

\begin{figure}[htp]
\begin{minipage}[b]{6.2cm}
\centering
\includegraphics[scale=0.33, clip = true, trim = 0 20 0 40]{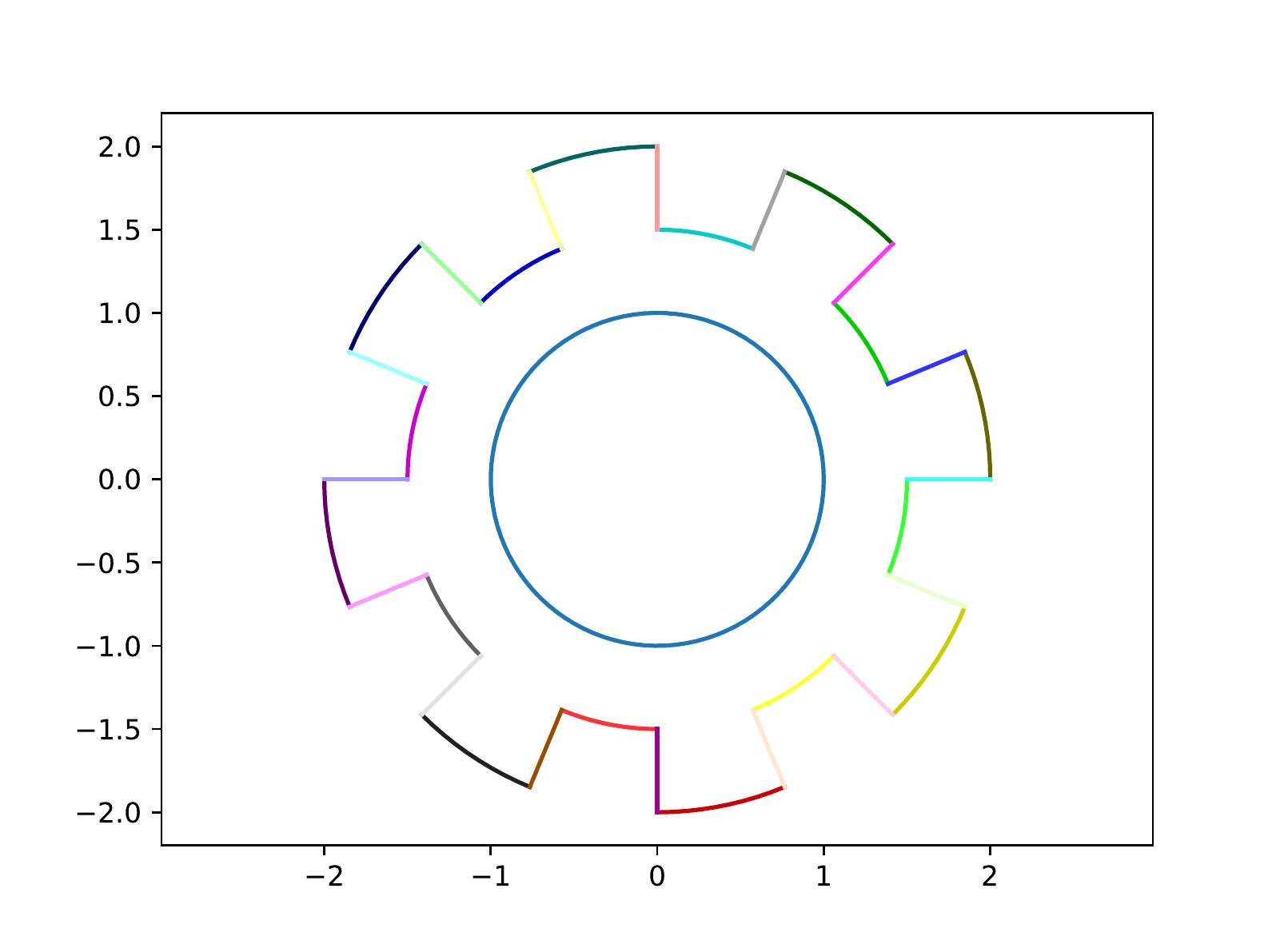} 
\end{minipage}
\ \
\begin{minipage}[b]{6.2cm}
\centering
\includegraphics[scale=0.33, clip = true, trim = 0 20 0 40]{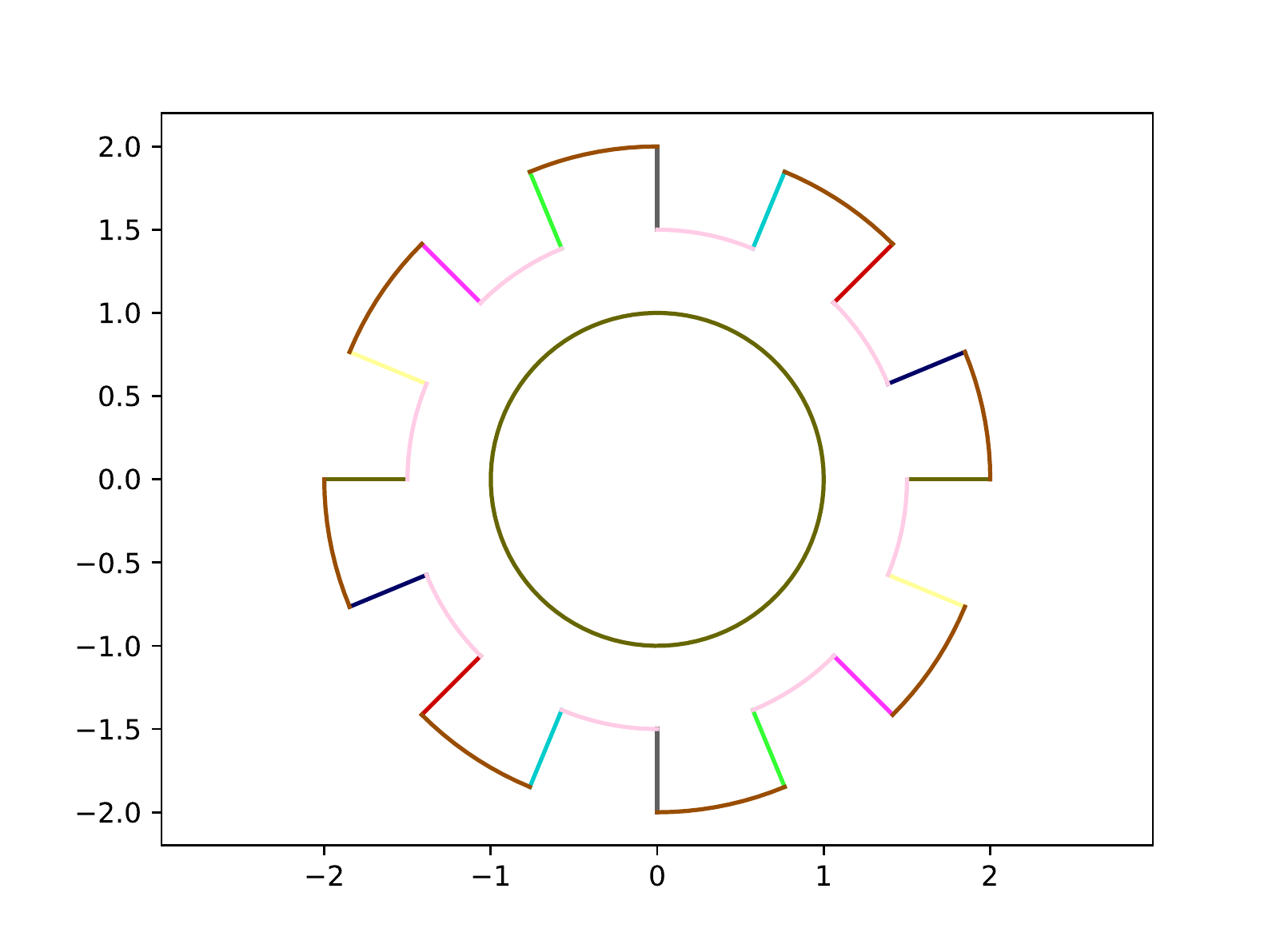}
\end{minipage}
\caption{Initial gear (left) and corresponding classified gear (right), with segments coloured by cluster.} 
\label{fig:Ex2}
\end{figure}

\subsubsection{Increasing number of teeth}
The algorithm is applied to gears with increasing numbers of teeth and run on a 2019 MacBook pro with 2.4 GHz 8-cores Intel\textregistered Core\textsuperscript{TM} i9-processor, resulting in the CPU times shown in Table \ref{table:gear_table}. Although these times suggest that both the dissimilarity matrix assembly and the clustering procedure in the implementation \cite{CodiceMldtt} have $\OOO(P^2)$ complexity, a further breakdown of the CPU times \cite{CodiceMldtt} shows that, in the naive implementation, repeatedly locating the smallest singular value in the clustering procedure yields $\OOO(P^3)$ complexity. However, for $P \le 16384$ patches, this term is dominated by the $\OOO(P^2)$ complexity of the remainder of the algorithm.

    \begin{table}[h!]
        \centering
		\begin{tabular*}{\columnwidth}{@{ }l@{\extracolsep{\stretch{1}}}*{6}{c}@{ }}

           \toprule
& \includegraphics[clip=True, trim = 90 25 90 15,scale=0.14]{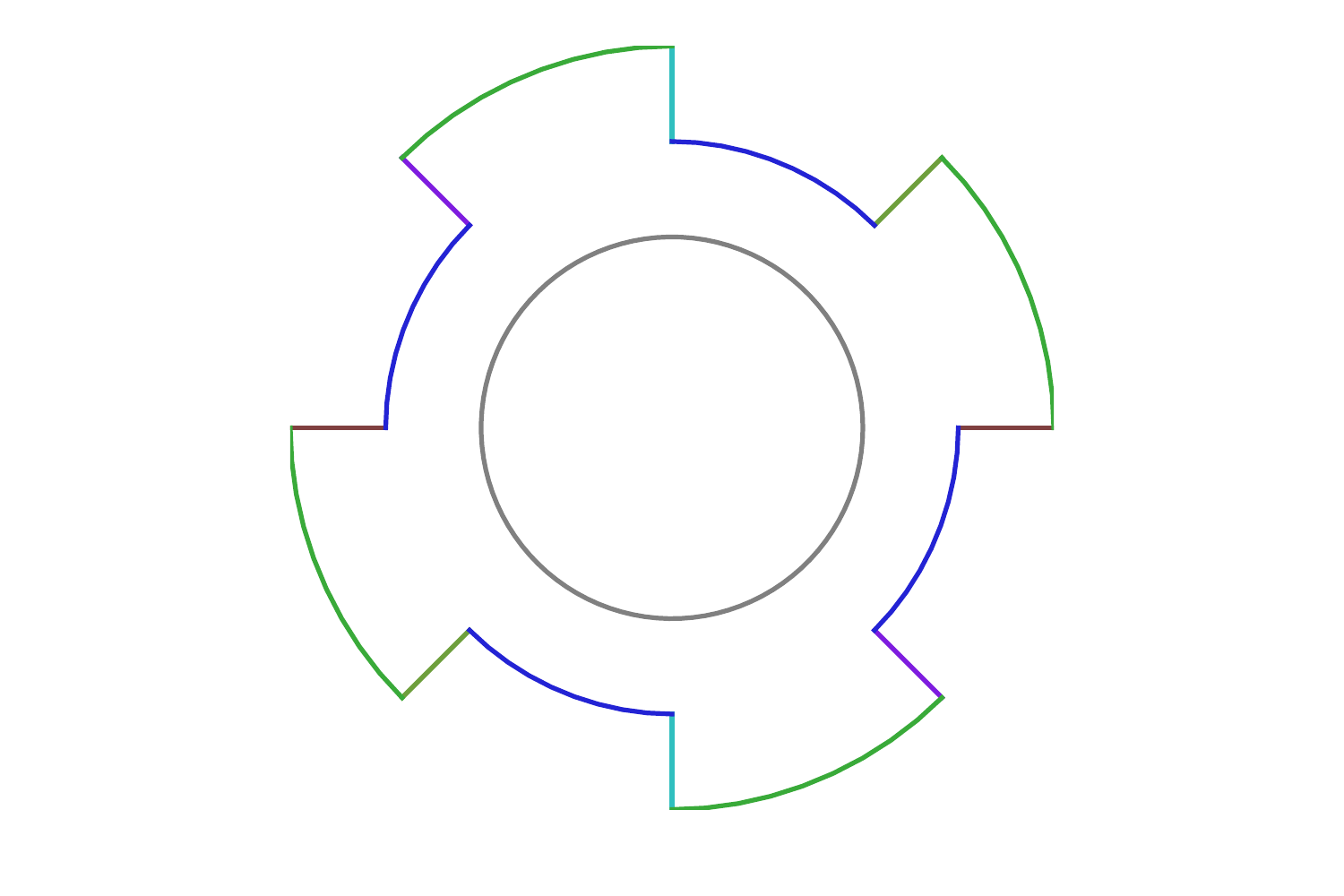}
& \includegraphics[clip=True, trim = 90 25 90 15,scale=0.14]{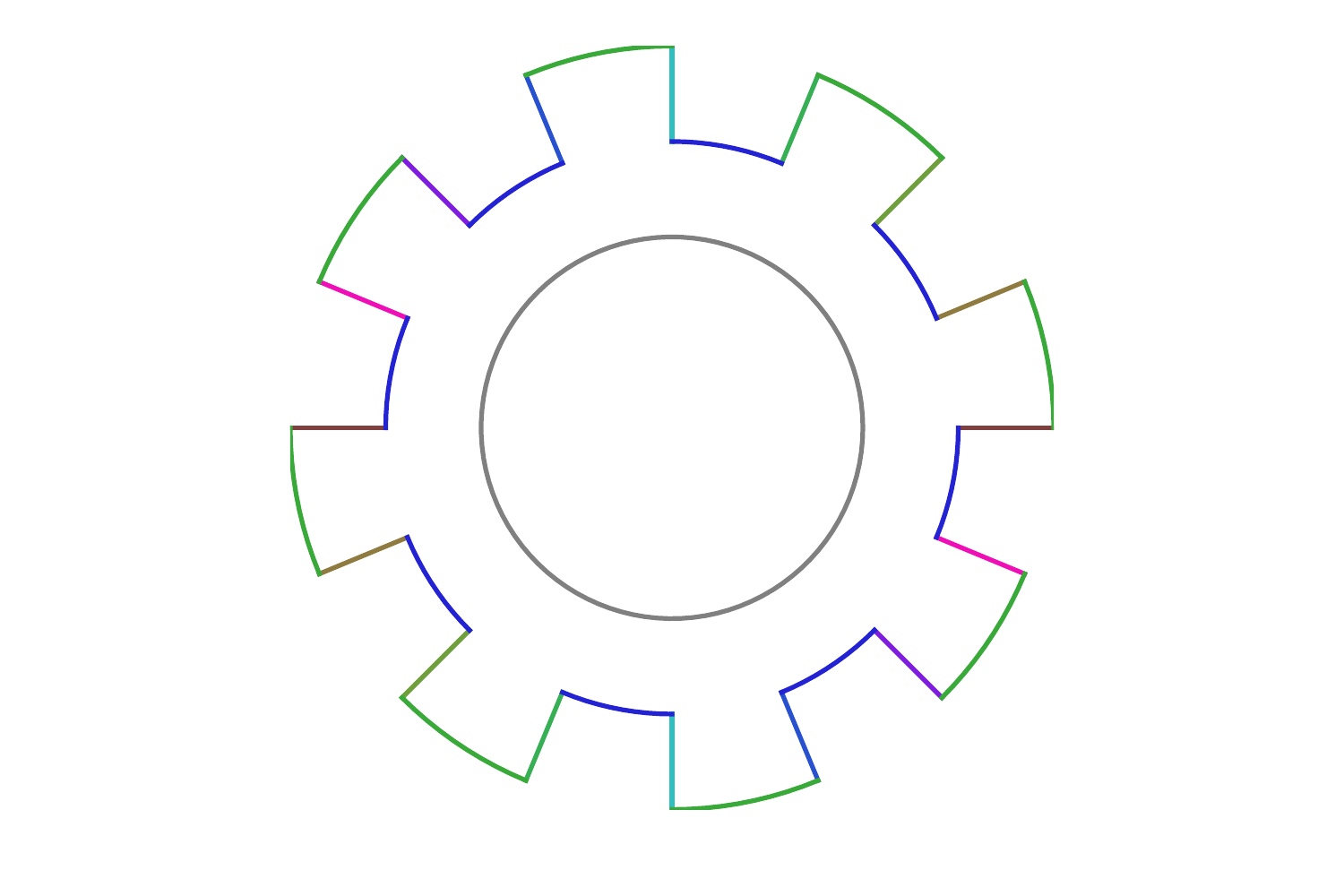}
& \includegraphics[clip=True, trim = 90 25 90 15,scale=0.14]{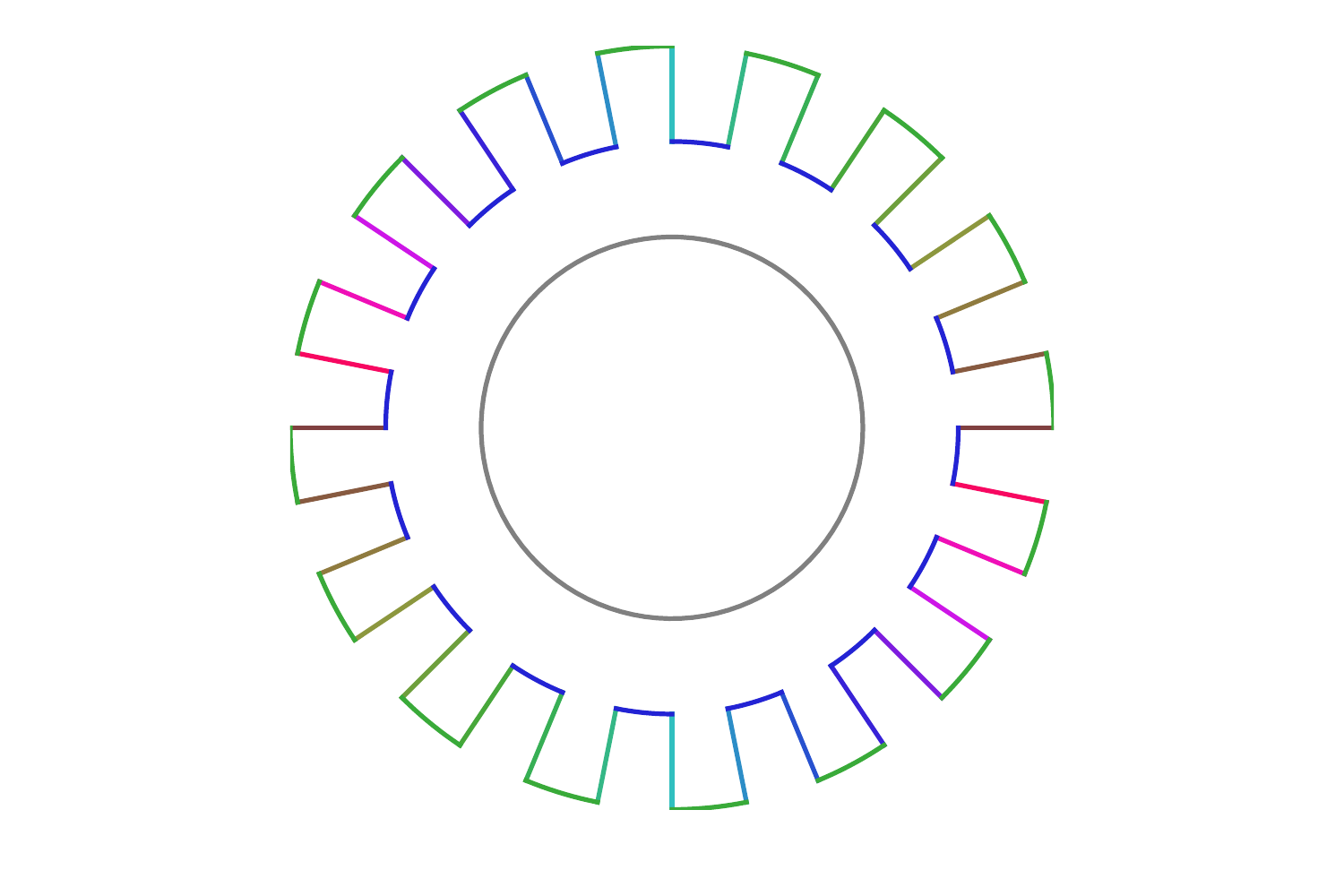}
& \includegraphics[clip=True, trim = 90 25 90 15,scale=0.14]{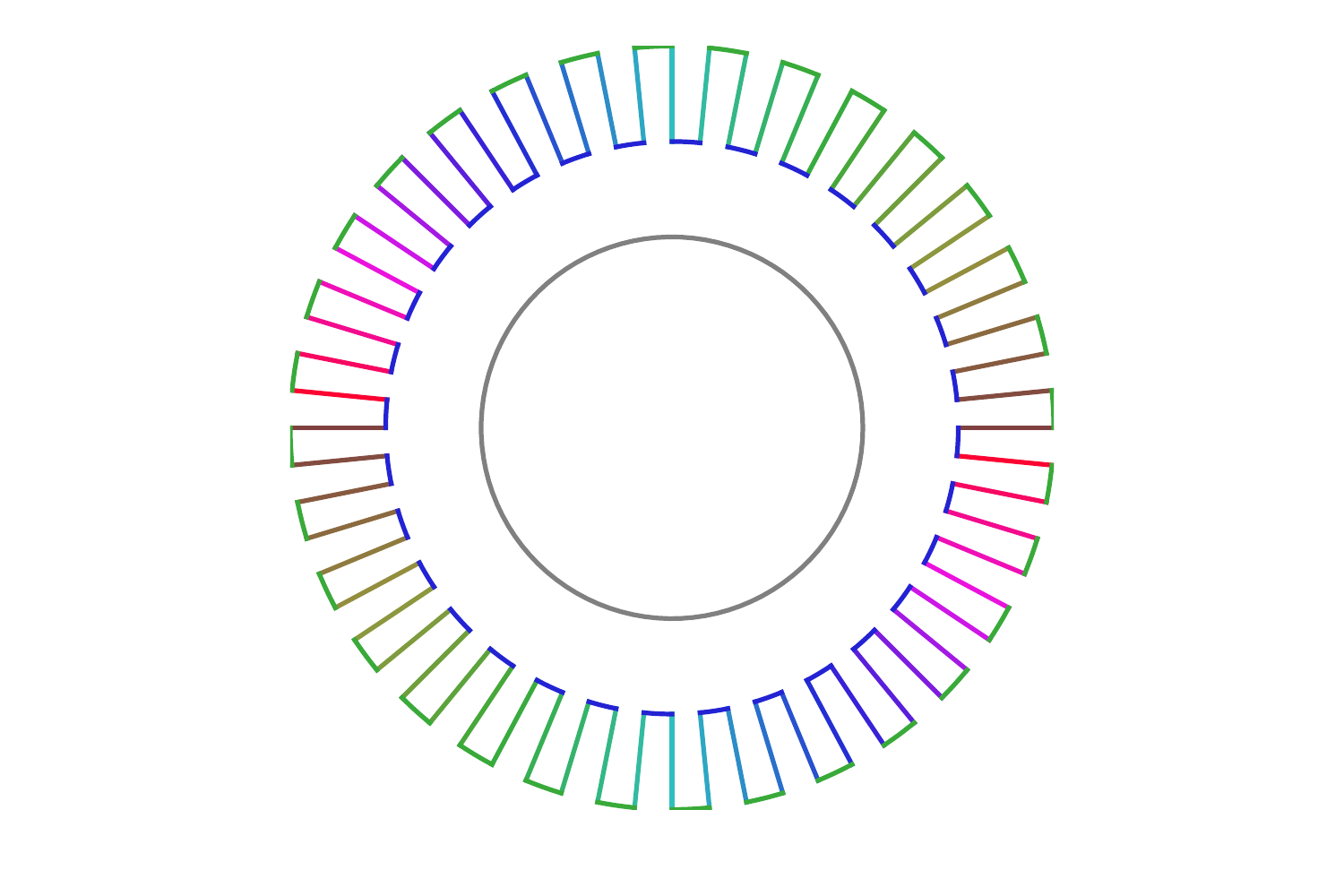}
& \includegraphics[clip=True, trim = 90 25 90 15,scale=0.14]{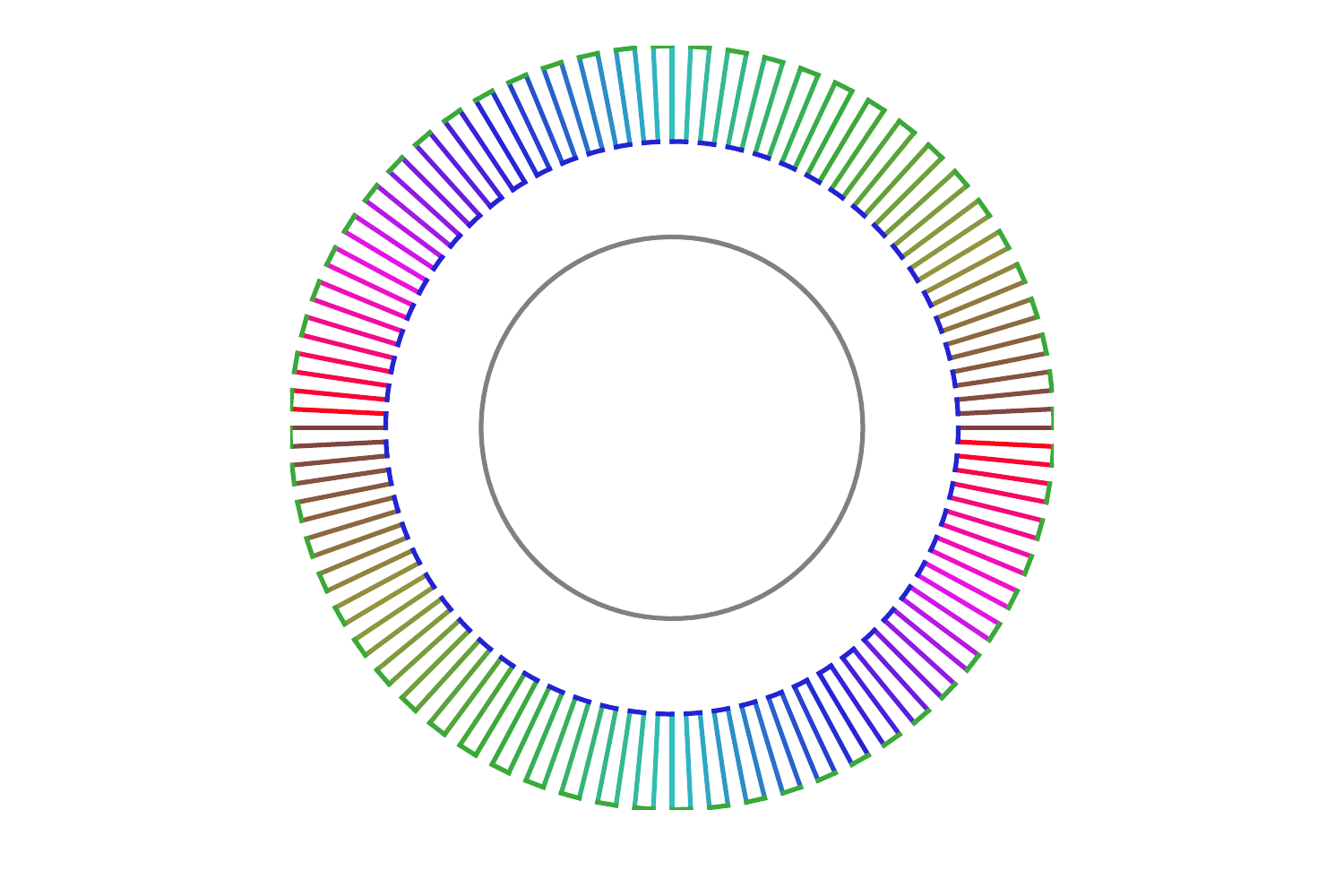}
& \includegraphics[clip=True, trim = 90 25 90 15,scale=0.14]{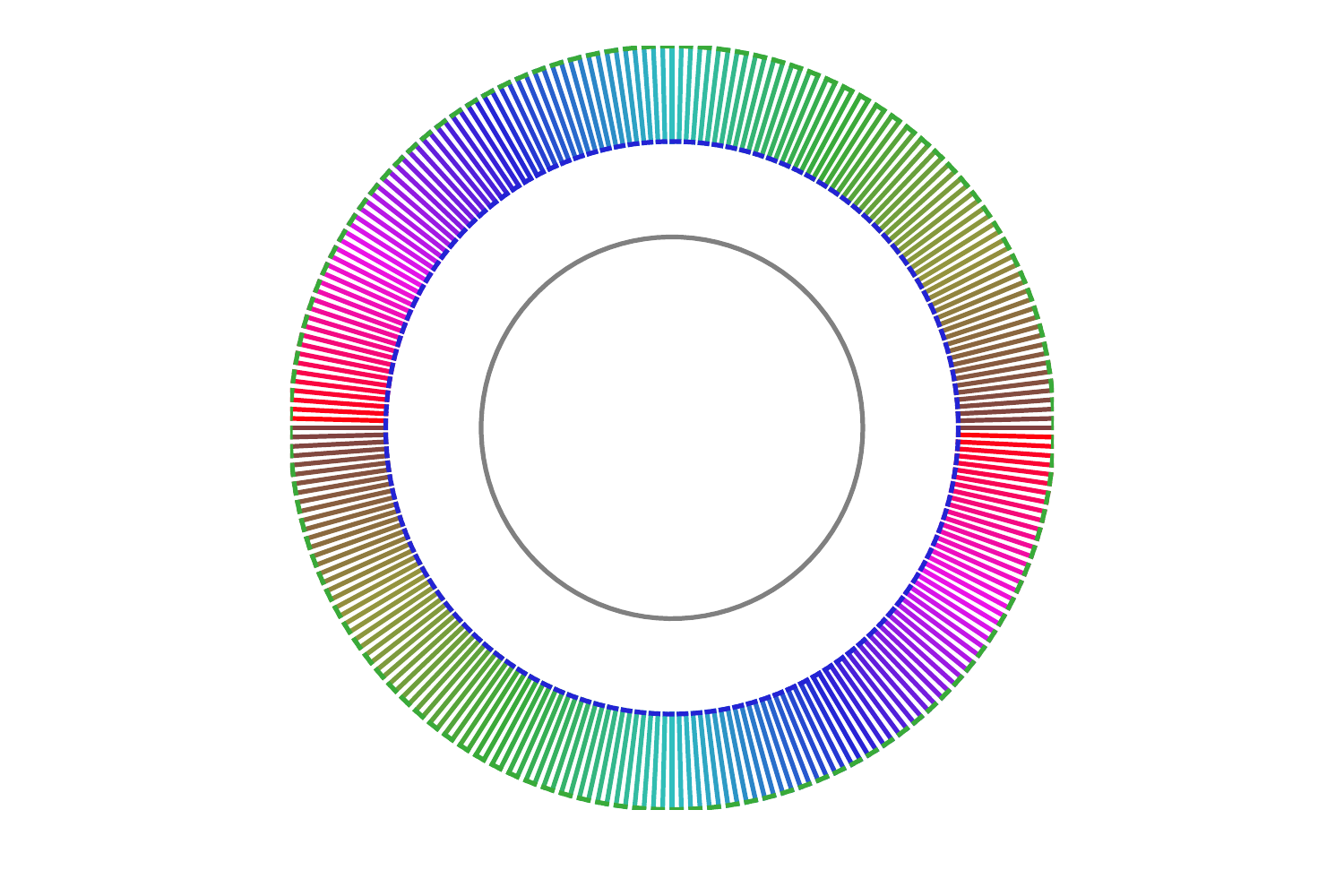}\\
            \midrule
            \# teeth    & 4 & 8 & 16 & 32 & 64 & 128 \\
            \# segments & 17 & 33 & 65 & 129 & 257 & 513 \\
            \midrule
            $t^{\text{assembly}}$  & 0.006 & 0.021 & 0.071 & 0.256 & 1.095 & 4.025\\
            $o^{\text{assembly}}$ & - & 1.708 & 1.778 & 1.843 & 2.096 & 1.879  \\
            \midrule
            $t^{\text{clustering}}$ & 0.002 & 0.003 & 0.008 & 0.042 & 0.115 & 0.405  \\
            $o^{\text{clustering}}$ & - & 0.980 & 1.339 & 2.352 & 1.436 & 1.823 \\
            \midrule
            $t^{\text{total}}$ & 0.009 & 0.025& 0.081 & 0.300 & 1.211 & 4.433\\
            $o^{\text{total}}$   &   -  &  1.483 & 1.681 & 1.888 & 2.015 & 1.872\\
            \bottomrule
        \end{tabular*}
        \caption{A breakdown of the CPU times $t_i$ (seconds) and complexity order $o_i:=\log_2(t_{i+1}/t_{i})$ when applied to gears with exponentially increasing number of teeth/segments.}
    \label{table:gear_table}
    \end{table}

\subsubsection{Increasing Gaussian noise}
Table \ref{table:noisy_gear_table} displays the results of running the algorithm on an 8-tooth gear with addition of synthetic Gaussian noise. Here, the mean is set to $0$ while the standard deviation is increased until the algorithm fails. This experiment suggests that, although well-suited for parametrically defined patches affected by round-off error, this method may require additional information when the input is perturbed by a significant amount of noise.

   \begin{table}[h!]
        \centering
		\begin{tabular*}{\columnwidth}{@{ }l@{\extracolsep{\stretch{1}}}*{4}{c}@{ }}

           \toprule
& \includegraphics[clip=True, trim = 90 25 90 15,scale=0.25]{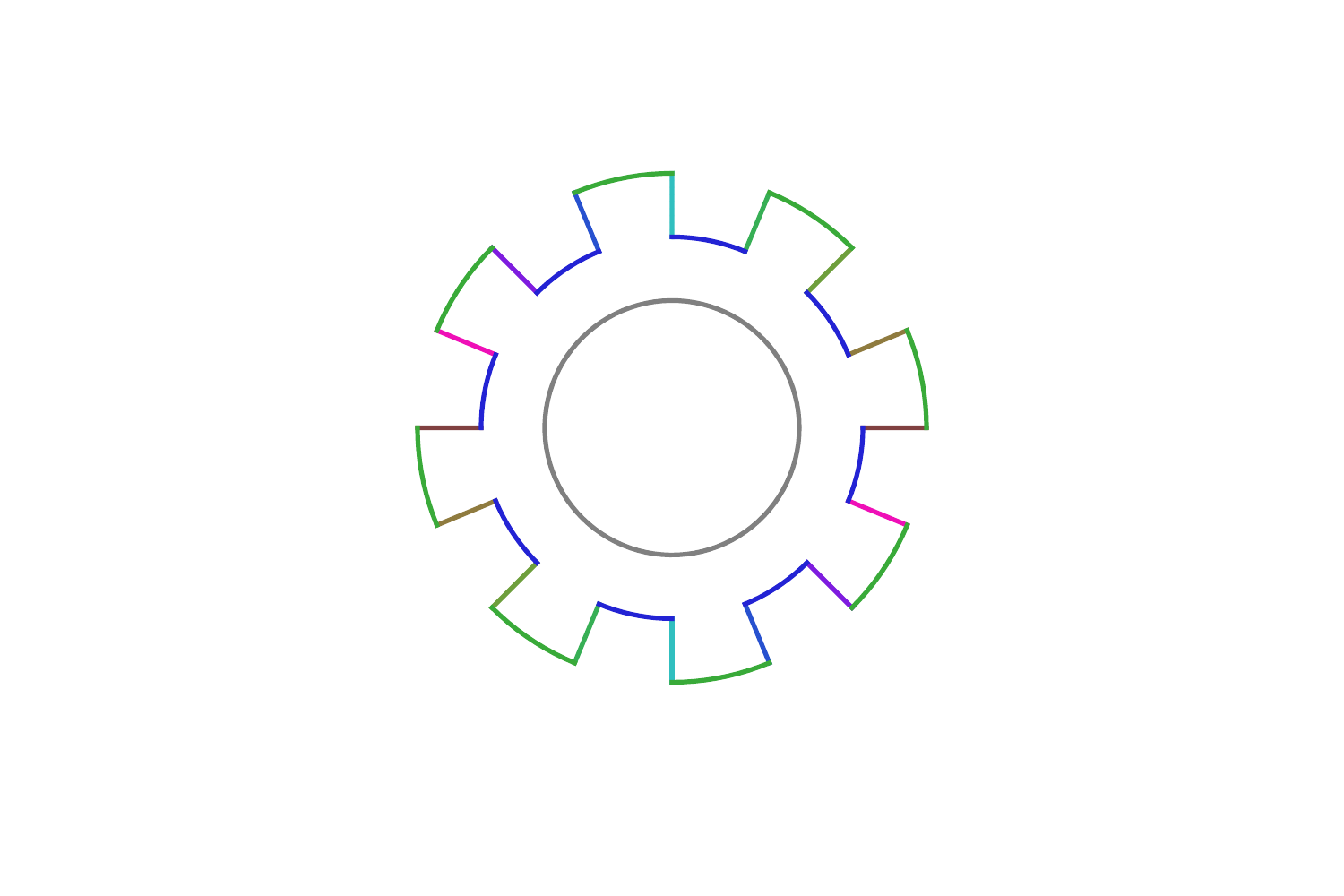}
& \includegraphics[clip=True, trim = 90 25 90 15,scale=0.25]{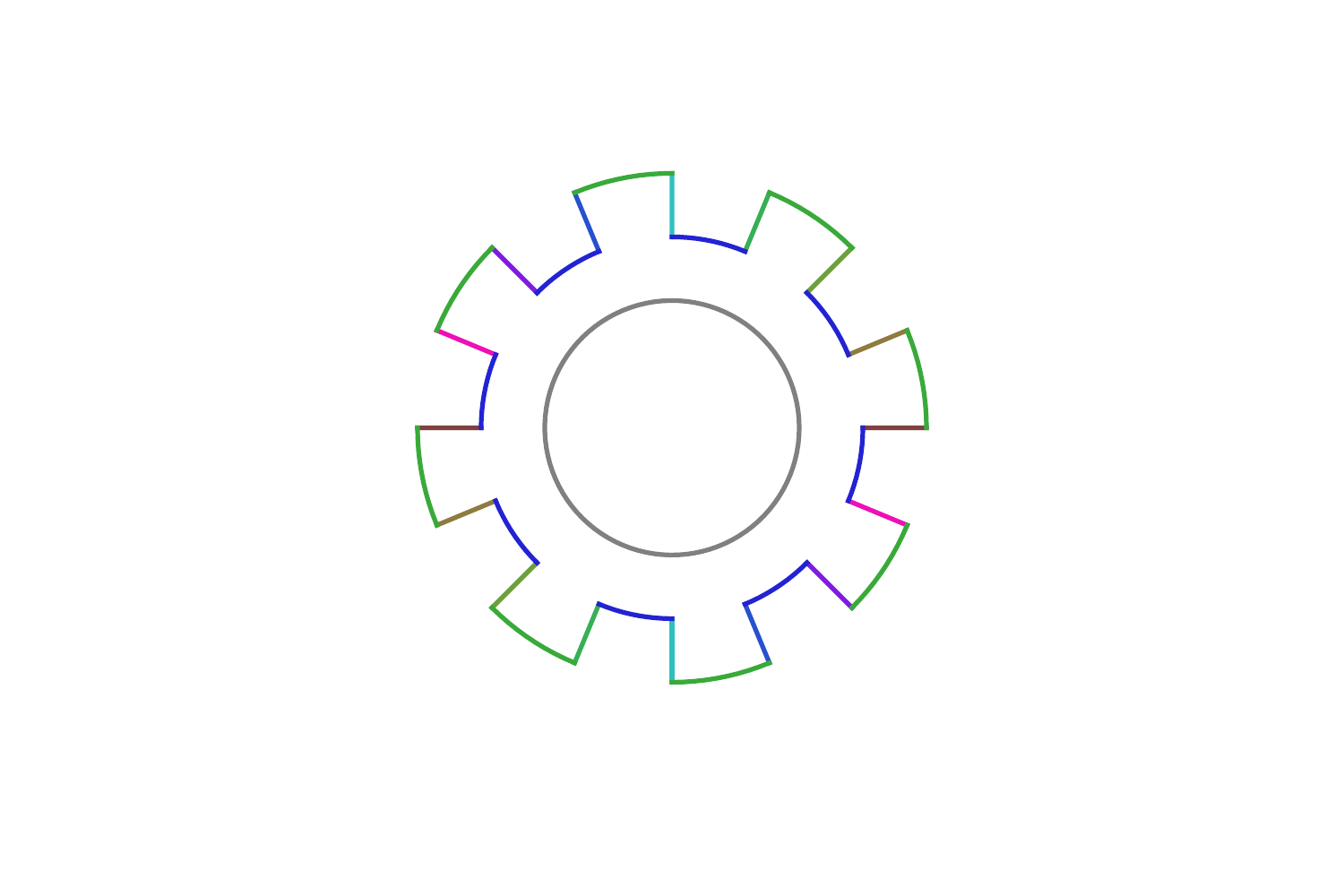}
& \includegraphics[clip=True, trim = 90 25 90 15,scale=0.25]{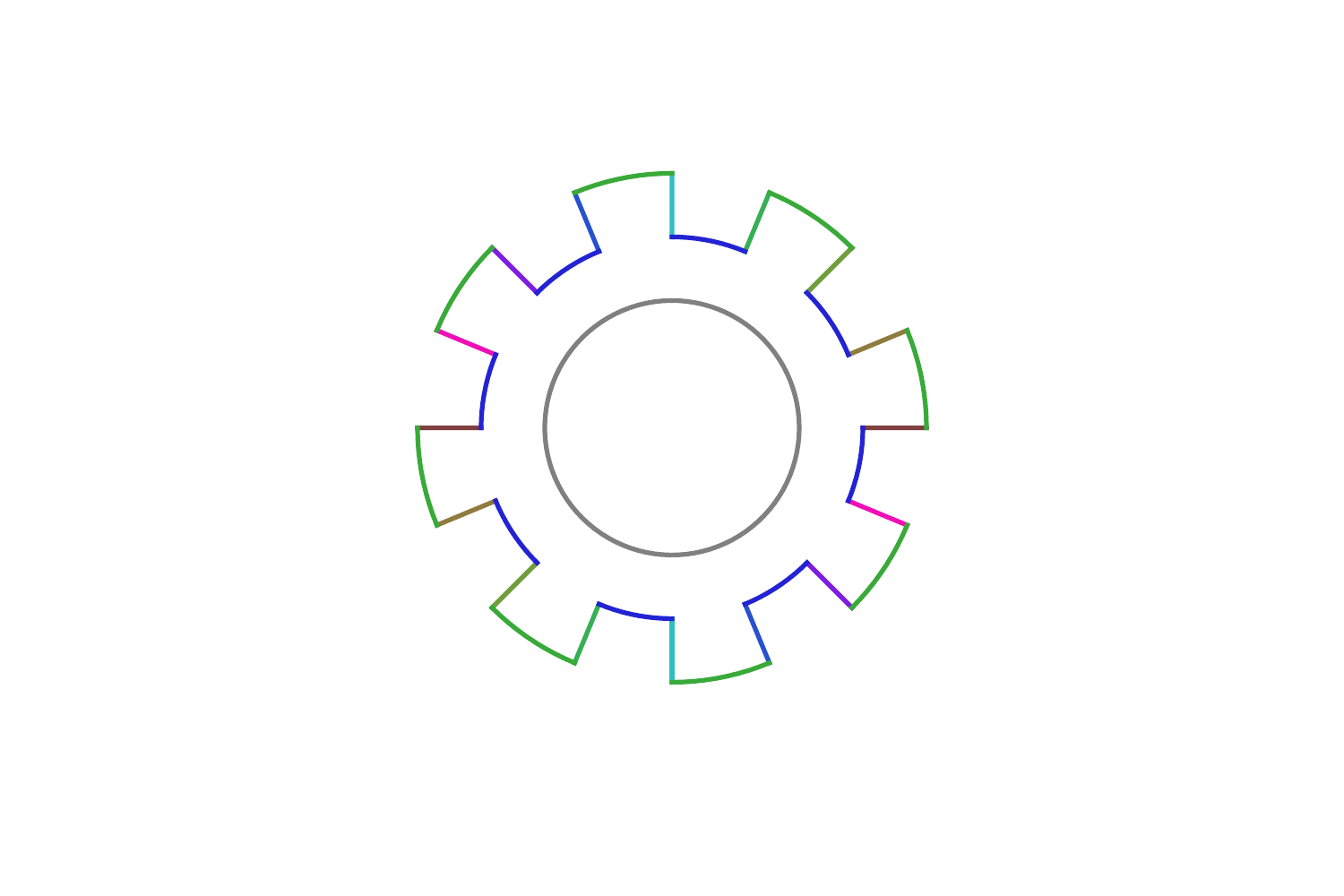}
& \includegraphics[clip=True, trim = 90 25 90 15,scale=0.25]{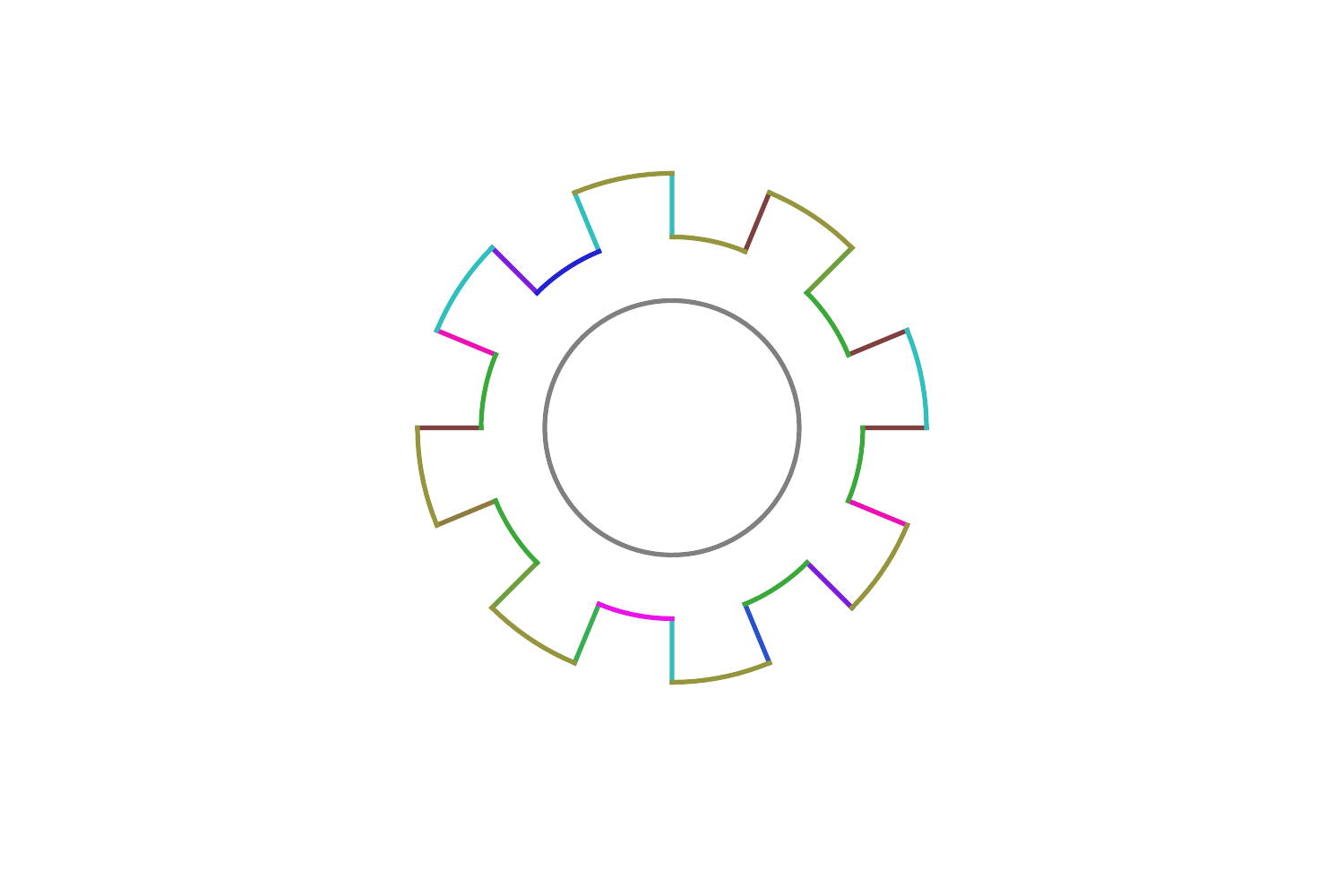}\\
            \midrule
            \ $\sigma$    & $10^{-5}$ & $10^{-4}$ & $10^{-3}$ & $10^{-2}$\\
            \ result & \cmark & \cmark${}^\ast$ & \cmark${}^\ast$ & \xmark\\
            \bottomrule
        \end{tabular*}
        \caption{Sensitivity of the method when an 8-tooth gear is perturbed by Gaussian noise of fixed mean $\mu=0$ and increasing standard deviation $\sigma$ in each direction. Here, \cmark{} and \xmark{} mean, respectively, a correct and an incorrect clustering of the patches. The asterisk signifies that the stopping criterion of Algorithm \ref{alg:StoppCriteta2} fails, but the method yields the correct result via a user-defined threshold (or when the number of clusters is known). }
    \label{table:noisy_gear_table}
    \end{table}
    
\subsection{Surface patches from a real-world industrial example}
In this example we utilize industrial data provided by the high-tech engineering firm STAM, based in Genoa, Italy.
The data, from STAM's Nugear model, consists of 133 trimmed B-spline patches where the underlying geometry comes from planes, cylinders and cones.
The individual patches are coloured randomly in Figure \ref{fig:stamgear} (left).
In order to deal with the trimmed patches in this example, we opt for a random oversampling approach, where we extract a random subsample of 64 points from oversampled surface data, c.f. Remark \ref{rmk:TrimmedSurfaces}. 
This approach is robust as we can always increase the density of the oversampling until we obtain 64 points within the trimmed region. Moreover, 64 points is always sufficient for dealing with patches of algebraic degree less than or equal to two, which are the targeted primitives of this example. 
The samples are generated using the same tessellation code that is used for visualization purposes.

The algorithm for surfaces in $\mathbb{R}^3$ proceeds in exactly the same way as the version for curves in $\mathbb{R}^2,$ with the exception that we must apply approximate implicitization to surfaces in $\mathbb{R}^3$ rather than curves in $\mathbb{R}^2.$
The algorithm correctly classifies all surfaces that should be classified together. The rest of the patches, dark-colored in Figure \ref{fig:stamgear} (right), should all be clustered individually. However, an empirical user-defined threshold  was required in order to avoid some of these surfaces being classified together incorrectly. The issue arises in that some of the bicubic spline surfaces are very close to planar, but not exactly. These surfaces are each classified as quadrics in the first step of the algorithm, but due to the almost linear nature of the surfaces, pairs of them also fit well to quadric surfaces. Thus, it is difficult to choose a threshold that stops the algorithm at the appropriate point.

\begin{figure}[h]
\begin{minipage}[b]{6.1cm}
\centering
\includegraphics[scale=0.40]{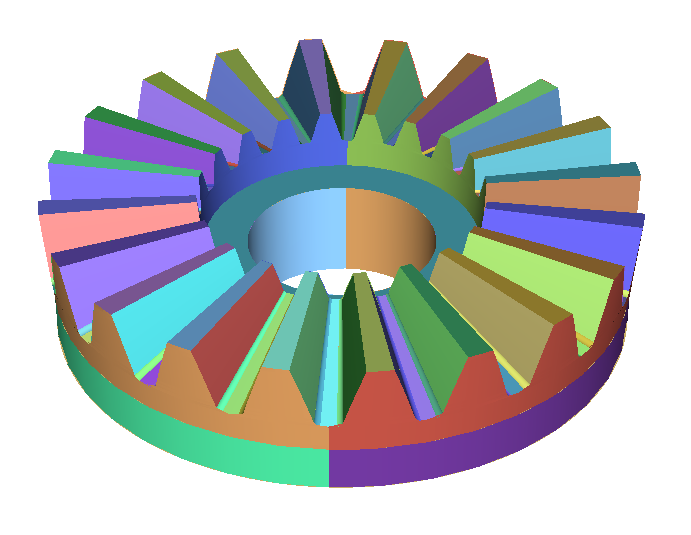}
\end{minipage}
\ \
\begin{minipage}[b]{6.1cm}
\centering
\includegraphics[scale=0.40]{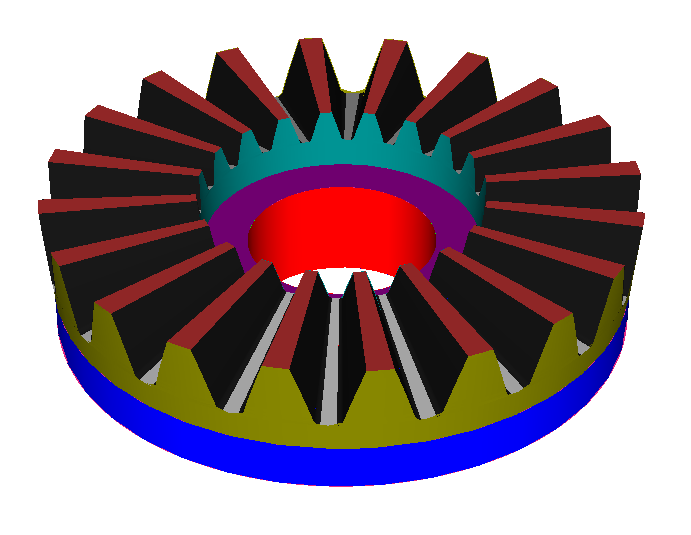}
\end{minipage}
\caption{Initial unclassified (left) and corresponding classified (right) Nugear model provided by STAM, with patches coloured by cluster.}
\label{fig:stamgear}
\end{figure}

\section{Conclusion}\label{sec:Conclusion}
This paper presents a novel method for clustering patches of a CAD model with respect to the underlying surface they originate from, based on approximate implicitization.
The method has been tested on both synthetic and real world industrial data, and some theoretical results are presented to show the properties of the method.
The results show that the approach is computationally feasible and can handle thousands of patches in a matter of seconds.

Our core idea is conceptually quite simple: reverse engineering of CAD patches into their underlying primitives, by using their low-degree implicit representation consistently throughout the algorithm as a basis for establishing similarity in a clustering procedure. However, several modifications were necessary to make this work in practice. Clustering proceeds on local patches, and the goodness of fit of approximate implicitization depends on the scale and position of the coordinate system (e.g., locally a circle looks like a line). This was addressed by introducing an initial tuning step to adapt tolerances to the provided data, and by adding a term to the dissimilarity metric measuring the distance between patches. The latter change also makes our dissimilarity satisfy the formal definition of a dissimilarity for the types of patches considered in this paper.

For future work, we intend to extend the method to other contexts.
For example, reverse engineering of physical models could also be done in this way, although work would be required on stabilizing the algorithm in the case of noisy data.
The algorithm could also be applied to tessellated models, both for the purposes of redesign and upsampling of the tessellation resolution.
In that case, the challenge would be segmenting which parts of the tessellation belong to different `patches' of the model. Finally, we aim at designing a more robust version that can be used to treat data affected by noise and outliers.

\appendix
\section{Appendix}
\label{Appendix}
In this appendix we provide the proofs left out of Section \ref{Theoreticalanalysis}, for discrete approximate implicitization of degree $d = 1, 2$ to point clouds with sufficiently many points sampled from curves of at least this degree. In this case the collocation matrix $\mathbf{D}$ has full rank.

\begin{proof}[Proof of Proposition \ref{prop:scaling}]
Equipping the basis with the lexicographic order, we can express the collocation matrix of the scaled curve $\mathbf{p}_{a_1,a_2}$ as 
 \[\mathbf{D}_{a_1,a_2}=\mathbf{D}_{1,1}\cdot\mathbf{S}_{a_1,a_2},\qquad 
 \mathbf{S}_{a_1,a_2}:=\text{diag}([1,a_1,a_2,a_1^2,a_1a_2,a_2^2,\dots]),
 \]
with $\mathbf{D}_{1,1}:=\mathbf{D}$. Notice that: 
\begin{itemize}
\item The smallest singular value $\sigma_{\min}^{(m)}\left(\mathscr{P}_{\mathbf{a}}\right)$ of $\mathbf{D}_{a_1,a_2}$ is the reciprocal of the largest singular value of
\[ \mathbf{D}_{a_1,a_2}^{\dagger}=\mathbf{S}_{a_1,a_2}^{-1}\cdot \mathbf{D}_{1,1}^{\dagger}, \]
where $\dagger$ denotes the Moore-Penrose inverse.
\item The largest singular value of $\mathbf{D}_{a_1,a_2}^{\dagger}$ is the square root of the largest eigenvalue of $\mathbf{D}_{a_1,a_2}^{\dagger}(\mathbf{D}_{a_1,a_2}^{\dagger})^T$.
\end{itemize}
Therefore 
\[\sigma_{\min}^{(m)}\left(\mathscr{P}_{\mathbf{a}}\right)=\Bigl[\lambda_{\text{max}}\left(\mathbf{B}\right)\Bigr]^{-1/2},\]
where
\[\mathbf{B}=
\mathbf{S}_{a_1,a_2}^{-1}\cdot
\mathbf{C}\cdot
\mathbf{S}_{a_1,a_2}^{-1}, \qquad
\mathbf{C}:=\mathbf{D}_{1,1}^{\dagger}\cdot{\mathbf{D}_{1,1}^{\dagger}}^T.\]

Suppose $\mathbf{B}$ has eigenvalues $\lambda_1\ge\lambda_2\ge\dots\ge\lambda_n\ge0$. Then
\[
tr\left(\mathbf{B}\right)=\sum_{i=1}^n\lambda_i=c_{11}+c_{22}\dfrac{1}{a_1^2}+c_{33}\dfrac{1}{a_2^2}+\dots
\]
where $c_{i,i}$ is the square of the Euclidean norm of the $i$-th row of $\mathbf{D}_{a_1,a_2}^{\dagger}$. Since $\mathbf{D}_{a_1,a_2}^{\dagger}$ has full rank whenever $a_1,a_2\neq 0$, then $tr\left(\mathbf{B}\right)$ (and consequently $\lambda_1$) approaches $+\infty$ when $a_1$ or $a_2$ approaches zero. It follows that $\sigma_{\min}^{(m)}\left(\mathscr{P}_{\mathbf{a}}\right)$ approaches zero when $a_1$ or $a_2$ approaches zero.
\end{proof}

\begin{proof}[Proof of Proposition \ref{prop:translation}]
The collocation matrix $\mathbf{D}_{a_1,a_2}$ has the form
\[
\mathbf{D}_{a_1,a_2}=\mathbf{D}_{0,0}\mathbf{T}_{a_1,a_2},\qquad
\mathbf{T}_{a_1,a_2}:=
\left(
\begin{array}{c|cc|ccc}
1 & a_1 & a_2 & a_1^2 & a_1a_2 & a_2^2\\ 
0 & 1 & 0 & 2a_1 & a_2 & 0 \\ 
0 & 0 & 1 & 0 & a_1 & 2a_2 \\
0& 0 & 0 & 1& 0 & 0 \\
0& 0 & 0 & 0& 1 & 0 \\
0& 0 & 0 & 0& 0 & 1
\end{array}
\right),
\]
and $\mathbf{D}_{0,0}:=\mathbf{D}$. Notice that, since discrete approximate implicitization of degree~$2$ is applied to a curve of implicit degree greater than $2$, we can assume that the collocation matrix has full rank. Notice also that: 
\begin{itemize}
\item The smallest singular value of $\mathbf{D}_{a_1,a_2}$ is the reciprocal of the largest singular value of \[\mathbf{D}_{a_1,a_2}^{\dagger}=\mathbf{T}_{-a_1,-a_2}\cdot \mathbf{D}_{0,0}^{\dagger}.\]
\item The largest singular value of $\mathbf{D}_{a_1,a_2}^{\dagger}$ is the square root of the largest eigenvalue of $\mathbf{B} := \mathbf{D}_{a_1,a_2}^{\dagger}(\mathbf{D}_{a_1,a_2}^{\dagger})^T$.
\end{itemize}
Therefore
\[\sigma_{\min}^{(m)}\left(\mathscr{P}_{\mathbf{a}}\right)=\Bigl[\lambda_{\max}\left(\mathbf{B}\right)\Bigr]^{-1/2},\]
where
\[
\mathbf{B}:=\mathbf{T}_{-a_1,-a_2}
\mathbf{C}
\mathbf{T}_{-a_1,-a_2}^T, \qquad
\mathbf{C} := \mathbf{D}_{0,0}^{\dagger}\left(\mathbf{D}_{0,0}^{\dagger}\right)^T.\]

Suppose $\mathbf{B}$ has eigenvalues $\lambda_1\ge\lambda_2\ge\dots\ge\lambda_n\ge0$. Then
\begin{equation}
\label{eqn:sumeigen1}
tr\left(\mathbf{B}\right)=\sum_{i=1}^n\lambda_i=\sum_{0\le i+j\le4}\alpha_{i,j}a_1^ia_2^j,
\end{equation}
for some $\alpha_{i,j}\in\mathbb{R}$. Notice that $\alpha_{0,4}>0$, since it is the square of the Euclidean norm of the 4-th row of the full-rank matrix $\mathbf{D}_{0,0}^{\dagger}$. A similar argument holds for $\alpha_{4,0}$. It follow that $tr\left(\mathbf{B}\right)$ (and consequently $\lambda_1$) approaches $+\infty$ when either $a_1$ or $a_2$ approaches $\infty$, and consequently $\sigma_{\min}^{(m)}\left(\mathscr{P}_{\mathbf{a}}\right)$ approaches zero when either $a_1$ or $a_2$ approaches $\infty$.
\end{proof}

\begin{proof}[Proof of Proposition \ref{prop:rotation}]
We treat separately the following cases:

\textit{Discrete approximate implicitization of degree $1$.} 
The collocation matrix $\mathbf{D}_{\theta}$ can be expressed as 
\[
\mathbf{D}_{\theta}=\mathbf{D}_0\cdot \mathbf{R}_{\theta},\qquad
\mathbf{R}_{\theta}:=
\begin{pmatrix}1 & 0 & 0 \\ 0 & \cos{\theta} & \sin{\theta}\\ 0 & - \sin{\theta} & \cos{\theta} \end{pmatrix},
\]
with $\mathbf{D}_0:=\mathbf{D}$. It follows that the SVD of $\mathbf{D}_{\theta}$ is
\[
\mathbf{D}_{\theta}=\mathbf{D}_0\mathbf{R}_{\theta}=\mathbf{U}\boldsymbol{\Sigma}\mathbf{V}^T\mathbf{R}_{\theta}=
\mathbf{U}\boldsymbol{\Sigma}\widetilde{\mathbf{V}}^T.
\]
where $\widetilde{\mathbf{V}}:=\mathbf{R}_{\theta}^T\mathbf{V}$ is unitary since $\mathbf{R}_{\theta}$ and $\mathbf{V}$ are unitary. Thus, the smallest singular value of discrete approximate implicitization of degree~$1$ is not influenced by rotations, and it suffices to choose $\alpha=\beta=\sigma_{\min}^{(m)}\left(\mathscr{P}_{0}\right)$.

\textit{Discrete approximate implicitization of degree greater than $1$.}
The collocation matrix $\mathbf{D}_{\theta}$ can be expressed as 
\[
\mathbf{D}_{\theta}=\mathbf{D}_0\cdot\mathbf{R}_{\theta},\]
where
$\mathbf{R}_{\theta}$ is the block diagonal matrix
\[
\mathbf{R}_{\theta}=
\left(
\begin{array}{c|cc|ccc|c}
	1 & 0 & 0 & 0 & 0 & 0 & \cdots \\ 
	\hline
	0 & \cos{\theta} & \sin{\theta} & 0 & 0 & 0 & \cdots\\ 
	0 & - \sin{\theta} & \cos{\theta} & 0 & 0 & 0 & \cdots \\
	\hline
	0 & 0 & 0 & \cos^2{\theta} & \sin{\theta}\cos{\theta} & \sin^2{\theta} & \cdots\\
	0 & 0 & 0 & -2\sin{\theta}\cos{\theta} & \cos^2{\theta}-\sin^2{\theta} & 2\sin{\theta}\cos{\theta} & \cdots\\
	0 & 0 & 0 & \sin^2{\theta} & -\sin{\theta}\cos{\theta} & \cos^2{\theta} & \cdots\\
	\hline
	\vdots & \vdots &\vdots &\vdots &\vdots &\vdots & \ddots
\end{array}
\right)
\]
The case of degree greater than $1$ differs since the matrix $\mathbf{R}_{\theta}$ is not unitary. The smallest singular value of $\mathbf{D}_{\theta}$ is the square root of the smallest eigenvalue of $\mathbf{D}_{\theta}^T\mathbf{D}_{\theta}$. The eigenvalues of $\mathbf{D}_{\theta}^T\mathbf{D}_{\theta}$ are continuous functions in $\theta$. Since $\theta\in[0,2\pi]$ lies in a compact space, we conclude by the extreme value theorem that each of these eigenvalues has a compact and connected range. Since $\mathbf{D}_{\theta}$ has full rank, we conclude that the range does not contain~$0$. $\hfill \qedhere$

\end{proof}

\section*{Acknowledgments}
This work has received funding from the European Union's Horizon 2020 research and innovation programme under grant agreement No 675789 and No 680448.
\bibliographystyle{unsrt}
\bibliography{Bibliografia}
\end{document}